\newtheorem{theorem}{Theorem}[section]
\newtheorem{proposition}[theorem]{Proposition}
\theoremstyle{definition}
\newtheorem{definition}[theorem]{Definition}
\newtheorem{remark}[theorem]{Remark}
\newtheorem{example}[theorem]{Example}
\newtheorem{conjecture}[theorem]{Conjecture}
\newtheorem{conjecture/question}[theorem]{Conjecture/Question}
\newtheorem{question}[theorem]{Question}
\newtheorem{remark/definition}[theorem]{Remark/Definition}
\newtheorem{terminology/notation}[theorem]{Terminology/Notation}
\newcommand{\dblq}{{/\!/}}
\def\PP{{\textbf P}}
\def\FF{{\textbf F}}
\def\OO{\mathcal{O}}
\def\cA{\mathcal{A}}
\def\cS{\mathcal{S}}
\def\cM{\mathcal{M}}
\def\cR{\mathcal{R}}
\def\cZ{\mathcal{Z}}
\def\cU{\mathcal{U}}
\def\Pic0{{\rm Pic}^0(X)}
\def\mm{\overline{\mathcal{M}}}
\def\ss{\overline{\mathcal{S}}}
\def\cc{\overline{\mathcal{C}}}
\def\zz{\overline{\cZ}}
\def\thet{\overline{\Theta}_{\mathrm{null}}}
\begin{document}
\title{Theta characteristics and their moduli}

\author[G. Farkas]{Gavril Farkas}

\address{Humboldt-Universit\"at zu Berlin, Institut F\"ur Mathematik,  Unter den Linden 6, \hfill \
 Berlin 10099, Germany} \email{{\tt farkas@math.hu-berlin.de}}
\thanks{}

\begin{abstract}
We discuss topics related to the geometry of theta characteristics on algebraic curves. They include the birational classification of the moduli space $\mathcal{S}_g$ of spin curves of genus $g$, interpretation of theta characteristics as quadrics in a vector space over the field with two elements as well as the connection with modular forms and superstring scattering amplitudes. Special attention is paid to the historical development of the subject.
\end{abstract}

\maketitle
Theta characteristics appeared for the first time in the context of characteristic theory of odd and even theta functions in the
papers of G\"opel \cite{Go} and Rosenhain \cite{Ro} on Jacobi's inversion formula for genus $2$. They were initially considered in connection with
\emph{Riemann's bilinear addition} relation between the degree two monomials in theta functions with characteristics. Later, in order to systematize the relations between theta constants, Frobenius \cite{Fr1}, \cite{Fr2} developed an algebra of characteristics \footnote{Frobenius' attempts to bring algebra into the theory of theta functions has to be seen in relation to his famous work on group characters. In 1893, when entering the Berlin Academy of Sciences he summarized his aims as follows \cite{Fr3}: \emph{In the theory of theta functions it is easy to set up an arbitrarily large   number of relations, but the difficulty begins when it comes to finding a way out of this labyrinth of formulas. Many a distinguished researcher, who through tenacious perseverence, has advanced the theory of theta functions in two, three, or four variables, has, after an outstanding demonstration of brilliant talent, grown silent either for a long time of forever. I have attempted to overcome this paralysis of the mathematical creative powers, by seeking renewal at the fountain of youth of arithmetic}.}; he distinguished between \emph{period} and \emph{theta} characteristics. The distinction, which in modern terms amounts to the difference between the \emph{Prym} moduli space
$\cR_g$ and the \emph{spin} moduli space $\cS_g$, played a crucial role in elucidating the transformation law for theta functions under a linear
transformation of the moduli, and it ultimately led to a correct definition of the action of symplectic group $\mbox{Sp}(\FF_2^{2g})$ on the set of characteristics. An overwiew of the 19th century theory of theta functions can be found in Krazer's monumental book \cite{Kr}. It is a very analytic treatise in character, with most geometric applications either completely absent relegated to footnotes.
\vskip 5pt

The remarkable book \cite{Cob} by Coble
\footnote{An irreverent portrait of Coble in the 1930's from someone who was not exactly well disposed towards algebraic geometry ("\emph{... the only part of mathematics where a counterexample to a theorem is considered to be a beautiful addition to it}"), can be found in Halmos' autobiography "I want to be a mathematician".}
represents a departure from the analytic view towards a more abstract understanding of theta characteristics using configurations in finite geometry. Coble viewed  theta characteristics as quadrics in a vector space over $\FF_2$. In this language Frobenius' earlier concepts (syzygetic and azygetic triples, fundamental systems of characteristics) have an elegant translation. With fashions in algebraic geometry drastically changing, the work of Coble was forgotten for many decades \footnote{This quote from Mattuck's \cite{Ma} obituary of Coble reveals the pervasive attitude of the 1960's: \emph{The book as a whole is a difficult mixture of algebra and analysis, with intricate geometric reasoning of a type few can follow today. The calculations are formidable; let them serve to our present day algebraic geometers, dwelling as they do in their Arcadias of abstraction, as a reminder of what awaits those who dare to ask specific questions about particular varieties}.}.
\vskip 3pt

The modern theory of theta characteristic begins with the works of Atiyah \cite{At} and Mumford \cite{Mu}; they showed, in the analytic (respectively algebraic) category, that the parity of a theta characteristic is stable under deformations. In particular, Mumford's functorial view of the subject, opened up the way to extending the study of theta characteristics to singular curves (achieved by Harris \cite{H}), to constructing a proper Deligne-Mumford moduli space of stable spin curves (carried out by  Cornalba \cite{C}), or to reinterpreting Coble's work in modern terms (see the book \cite{DO}).

\vskip 3pt
The aim of this paper is to survey various developments concerning the geometry of moduli spaces of spin curves. Particular emphasis is placed on the complete birational classification of both the even and the odd spin moduli spaces, which has been carried out in the papers \cite{F2}, \cite{FV1} and \cite{FV2}. Precisely, we shall explain the following result:
\begin{theorem} The birational type of the moduli spaces $\ss_g^+$ and $\ss_g^-$ of even and odd spin curves of genus $g$  can be summarized as follows:
\begin{center}
$\ss_g^+$: \ \begin{tabular}{c|c}
 $g> 8$ & general type\\
 \hline
 $g=8$  &Calabi-Yau \\
 \hline
$g\leq 7$ & unirational
\end{tabular} \ \ \ \mbox{ } \mbox{  }
$\ss_g^-$: \ \begin{tabular}{c|c}
 $g\geq 12$ & general type\\
 \hline
  $g\leq 8$  & unirational\\
 \hline
 $9\leq g\leq 11$ & uniruled
\end{tabular}
\end{center}
\end{theorem}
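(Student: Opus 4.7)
The plan is to work on Cornalba's compactification $\ss_g^{\pm}$ of the moduli space of stable spin curves of genus $g$, whose rational Picard group is freely generated by the Hodge class $\lambda$ together with the boundary divisors $\alpha_0, \beta_0$ (coming from the two ways a theta characteristic extends across an irreducible nodal degeneration) and the classes $\alpha_i, \beta_i$ for $1\leq i\leq [g/2]$ (coming from reducible degenerations). Using Cornalba's formula, the canonical class $K_{\ss_g^{\pm}}$ can be expressed explicitly as a rational combination of these generators; the question of Kodaira dimension then reduces to a slope comparison between $K_{\ss_g^{\pm}}$ and the effective/nef cones of $\ss_g^{\pm}$.

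For the general-type statements, the strategy is to exhibit effective divisors of sufficiently small slope and then write $K_{\ss_g^{\pm}}$ as a positive combination of such a divisor together with an ample or big class. For even spin curves with $g>8$, the central role is played by the theta-null divisor $\thet$ parametrizing pairs $(C,\eta)$ with $h^0(C,\eta)>0$; computing its class in the basis above, comparing with $K_{\ss_g^+}$, and then pushing $\lambda$ around using known positivity on the interior $\cS_g^+$ should force $K_{\ss_g^+}$ to be big precisely when $g>8$, and to sit on the critical (Calabi--Yau) threshold when $g=8$. For odd spin curves with $g\geq 12$ the analogous role is played by Brill--Noether--type divisors encoding the interaction of the theta characteristic with pencils of small degree, such as the divisor $\AUX$ and its higher-genus incarnations; one computes their classes via test-curve calculations on the universal spin curve.

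For unirationality in low genus, the approach is via explicit projective models. For even spin $g\leq 7$ and odd spin $g\leq 8$, one realizes a general spin curve on a rational surface (double planes, K3 surfaces and their linear sections, del Pezzo models, or scrollar embeddings) so that the construction depends on finitely many parameters varying in a rational family. One then checks that the resulting rational map to $\ss_g^{\pm}$ is dominant by computing the dimension of the parameter space and matching it with $\dim \ss_g^{\pm}=3g-3$. In the uniruled range $9\leq g\leq 11$ for odd spin curves, it suffices to construct a moving family of rational curves through the generic point, most naturally obtained by varying a relevant vector-bundle/embedding construction in a pencil rather than a full rational family.

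The main obstacle is the explicit computation of the divisor classes appearing in the slope comparisons: each effective divisor above is defined by a Brill--Noether or theta-theoretic condition on the universal spin curve, and extracting its coefficients along every boundary stratum $\alpha_i,\beta_i$ requires delicate intersection-theoretic test curve arguments, together with an analysis of limit linear series on spin semistable curves. A second subtle point is the sharp threshold genera $g=8$ (even) and $g=11,12$ (odd): here the slope of the optimal effective divisor matches that of $K$ almost exactly, so the argument must be tight and the classical projective model used for unirationality or uniruledness must be nearly the best possible, leaving little room for loss.
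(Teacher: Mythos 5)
Your overall strategy (work on Cornalba's compactification, use the Picard group generated by $\lambda,\alpha_i,\beta_i$, express $K_{\ss_g^{\pm}}$ via Riemann--Hurwitz over $\mm_g$, and do a slope comparison against effective divisors for general type, explicit projective models for the low-genus range) is indeed the paper's strategy. But there are two genuine gaps. First, you never address the singularities of $\ss_g^{\pm}$. Both spaces have \emph{non-canonical} singularities, so showing that $K_{\ss_g^{\pm}}$ is big as a divisor class does not by itself give general type: there are local obstructions to extending pluricanonical forms to a resolution. The argument only closes because of Ludwig's theorem that for $g\geq 4$ every pluricanonical form on the smooth locus extends to a resolution $\widetilde{\cS}_g\rightarrow \ss_g$; without invoking this (or reproving it), the slope computation proves nothing about the Kodaira dimension. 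Second, the slope comparison as you set it up fails with the divisors you name alone. The class $[\thet]=\frac14\lambda-\frac1{16}\alpha_0-\frac12\sum_i\beta_i$ has zero coefficient along $\beta_0$ and along all $\alpha_i$, so $K_{\ss_g^+}$, whose $\beta_0$-coefficient is $-3$, cannot lie in the cone spanned by $[\thet]$, $\lambda$ and boundary classes; similarly, for the odd space the paper's divisor $\zz_g$ (odd spin curves whose theta characteristic has non-reduced support, pulled back from the extremal divisor $\Delta_{0:2}$ on $\cc_{g,g-1}$ --- not a pencil-type divisor as you propose) has a $\beta_0$-coefficient that is too small. The essential move, absent from your proposal, is to combine the spin divisor with pull-backs $\pi^*(\mm_{g,d}^r)$ of Brill--Noether divisors from $\mm_g$ with $\rho(g,r,d)=-1$; only the resulting combination has slope $\frac{11g+29}{g+1}$ (even) resp.\ $\frac{11g+37}{g+1}$ (odd), which beats $13$ exactly for $g>8$ resp.\ $g>12$, with $g=12$ and the non-composite $g+1$ cases needing separate treatment. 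Vague ``positivity of $\lambda$ on the interior'' cannot substitute for this step.

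On the low-genus side, a dimension count of a rational parameter space is not where the difficulty lies, and the generic models you list (double planes, del Pezzo, scrollar) are not the ones that work. For $\ss_g^+$ the construction requires \emph{Nikulin} $K3$ surfaces: an ordinary $K3$ containing $C$ produces, via the $g-1$ points of the support of an odd theta characteristic spanning a codimension-two plane, only odd spin structures on the members of the pencil; one needs the half-period $e\otimes\OO_{C'}$ coming from the Nikulin double cover to twist these into \emph{even} theta characteristics, and one must know (a theorem of \cite{FV2}) that a general Prym curve of genus $g\leq 7$ lies on such a surface. This gives uniruledness; unirationality of $\ss_g^+$ for $g\leq 7$, and of $\ss_g^-$ for $g\leq 8$, rests on a further mechanism you do not supply: for the odd case, the Mukai models $V_g$, the space of clusters $Z\subset V_g$, and the incidence variety $\mathcal{P}_{g,\delta}$, which is a projective bundle over the nodal-spin locus $B_{g,\delta}^-$ and dominates $\cU_g^-$ exactly when $\delta\leq n_g-1$ --- the numerical constraint that explains why the method stops at $g=8$. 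Finally, the $g=8$ ``Calabi--Yau'' entry is not a soft threshold statement: one must show $K_{\ss_8^+}$ is effective but not big, which in \cite{FV2} uses the extremality of $[\thet]$ in the effective cone for $g\leq 9$; your proposal offers no argument for this intermediate Kodaira dimension claim.
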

We describe the structure of the paper. In the first two sections we recall the interpretation of theta characteristics as quadrics in an $\FF_2$-vector space, then link this description both with the classical theory of characteristics of theta functions and modern developments inspired by string theory. In the next three sections we explain some features of the geometry of the moduli space $\ss_g$ of stable spin curves of genus $g$, discuss ways of constructing effective divisors on $\ss_g$ and computing their cohomology classes, then finally present unirational parametrizations of the moduli space in small genus, by using Mukai models and special $K3$ surfaces. We close by surveying a few open problems related to syzygies of theta characteristics and stratifications of the moduli space.

\section{Theta characteristics: a view using finite geometry}
For a smooth algebraic curve $C$ of genus $g$ we denote by $$J_2(C):=\{\eta\in \mbox{Pic}^0(C):\eta^{\otimes 2}=\OO_C\}$$ the space of two-torsion points in the Jacobian of $C$, viewed as an $\FF_2$-vector space. We recall the definition of the \emph{Weil pairing} $\langle \cdot, \cdot \rangle:J_2(C)\times J_2(C)\rightarrow \FF_2$, cf. \cite{Mu} Lemma 2:
\begin{definition} Let $\eta, \epsilon\in J_2(C)$ and write $\eta=\OO_C(D)$ and $\epsilon=\OO_C(E)$, for certain divisors $D$ and $E$ on $C$, such that $\mbox{supp}(D)\cap \mbox{supp}(E)=\emptyset$. Pick rational functions $f$ and $g$ on $C$, such that $\mbox{div}(f)=2D$ and $\mbox{div}(g)=2E$. Then define $\langle \eta, \epsilon \rangle\in \FF_2$ by the formula
$$(-1)^{\langle \eta, \epsilon \rangle}=\frac{f(E)}{g(D)}.$$
\end{definition}
The definition of $\langle \eta, \epsilon\rangle$ is independent of the choice of the divisors $D$ and $E$ and the rational functions $f$ and $g$. The Weil pairing is a nondegenerate symplectic form.
The set of \emph{theta characteristics} of $C$, defined as $\mbox{Th}(C):=\{\theta\in \mbox{Pic}^{g-1}(C): \theta^{\otimes 2}=K_C\}$, is an affine space over $J_2(C)$. It was Coble's insight \cite{Cob} to realize that in order to acquire an abstract understanding of the geometry of $\mbox{Th}(C)$ and clarify the distinction between the period characteristics and the theta characteristics of $C$, it is advantageous to view theta characteristics as quadrics in the vector space $J_2(C)$.
\begin{definition}
Let $(V, \langle \cdot, \cdot \rangle)$ be a symplectic vector space over $\FF_2$. The set $Q(V)$ of quadratic forms on $V$ with fixed polarity
given by the symplectic form $\langle \cdot, \cdot \rangle$, consists of all functions $q:V\rightarrow \FF_2$ satisfying the identity
$$q(x+y)=q(x)+q(y)+\langle x, y\rangle, \ \mbox{ for all } x, y\in V.$$
\end{definition}
If $q\in Q(V)$ is a quadratic form and $v\in V$, we can define a new quadratic form $q+v\in Q(V)$ by setting
$(q+v)(x):=q(x)+\langle v, x\rangle$,  for all  $x\in V.$ Similarly one can add two quadratic forms. If $q, q'\in Q(V)$, then there exists a uniquely determined element $v\in V$ such that $q'=q+v$, and we set $q+q':=v\in V$. In this way, the set $\widetilde{V}:=V\cup Q(V)$ becomes a $(2g+1)$-dimensional vector space over $\FF_2$. There is a natural action of the symplectic group $\mathrm{Sp}(V)$ on $Q(V)$. For a transformation
$T\in \mathrm{Sp}(V)$ and a quadratic form $q\in Q(V)$, one defines
$$(T\cdot q)(x):=q(T^{-1}(x)), \mbox{ for } \ x\in V.$$
This action has two orbits $Q(V)^+$ and $Q(V)^-$ respectively, which can de distinguished by the \emph{Arf invariant} \cite{Arf} of a quadratic form.
\begin{definition} Let us choose a symplectic base $(e_1, \ldots, e_g, f_1, \ldots, f_g)$ of $V$. Then define
$$\mathrm{arf}(q):=\sum_{i=1}^g q(e_i)\cdot q(f_i)\in \FF_2.$$
\end{definition}
The invariant $\mbox{arf}(q)$ is independent of the choice of a symplectic basis of $V$. We set $Q(V)^+:=\{q\in Q(V): \mbox{arf}(q)=0\}$ to be the space of even quadratic form, and $Q(V)^-:=\{q\in Q(V): \mbox{arf}(q)=1\}$ that of odd quadratic forms.
Note that there are $2^{g-1}(2^g+1)$ even quadratic forms and $2^{g-1}(2^g-1)$ odd ones.

\vskip 3pt
Every theta characteristic $\theta\in \mbox{Th}(C)$ defines a form $q_{\theta}:J_2(C)\rightarrow \FF_2$, by setting
$$q_{\theta}(\eta):=h^0(C, \eta\otimes \theta)+h^0(C, \theta) \ \mathrm{ mod }\ 2.$$
It follows from the \emph{Riemann-Mumford relation} \cite{Mu} or \cite{H} Theorem 1.13, that $q_{\theta}\in Q(J_2(C))$, that is, the polar of $q_{\theta}$ is the Weil form. For $\eta, \epsilon\in J_2(C)$, the following relation holds:
$$h^0(C, \theta\otimes \eta\otimes \epsilon)+h^0(C, \theta\otimes \eta)+h^0(C, \theta\otimes \epsilon)+h^0(C, \theta)\equiv \langle \eta, \epsilon\rangle\ \mbox{ mod }\ 2.$$
Thus one has the following identification between theta characteristics and quadrics:
\begin{center}
\fbox{$\mathrm{Th}(C)=Q\bigl(J_2(C), \ \langle \cdot, \cdot \rangle \bigr).$}
\end{center}
Under this isomorphism, even (respectively odd) theta characteristics correspond to forms in $Q(V)^+$ (respectively $Q(V)^-$). Furthermore, $\mbox{arf}(q_{\theta})=h^0(C, \theta) \mbox{ mod } 2$.
Using this identification, one can translate Frobenius's
\cite{Fr1}, \cite{Fr2} entire \emph{theory of fundamental systems} of theta characteristics into an abstract setting. Of great importance is the following:
\begin{definition}
A system of three theta characteristics $\theta_1, \theta_2, \theta_3\in \mathrm{Th}(C)$ is called \emph{syzygetic} (respectively
\emph{azygetic}) if $\mbox{arf}(q_{\theta_1})+\mbox{arf}(q_{\theta_2})+\mbox{arf}(q_{\theta_3})+\mbox{arf}(q_{\theta_1+\theta_2+\theta_3})=0$ (respectively $1$).
\end{definition}
Here the additive notation $\theta_1+\theta_2+\theta_3$ refers to addition in the extended vector space $\widetilde{J_2(C)}=J_2(C)\cup \mbox{Th}(C)$. In terms of line bundles,  $\theta_1+\theta_2+\theta_3=\theta_1\otimes \theta_2\otimes \theta_3^{\vee}\in \mbox{Pic}^{g-1}(C)$. It is an easy exercise to show that $\{\theta_1, \theta_2, \theta_3\}$ is a syzygetic triple if and only if $\langle \theta_1+\theta_2, \theta_1+\theta_3\rangle=0$. If the system $\{\theta_1, \theta_2, \theta_3\}$ is syzygetic, then any three elements of the set $\{\theta_1, \theta_2, \theta_3, \theta_1+\theta_2+\theta_3\}$ form a syzygetic triple. In this case we say that the four theta characteristics form a \emph{syzygetic tetrad}.

\begin{example} Four odd theta characteristics $\theta_1, \ldots, \theta_4\in \mbox{Th}(C)$ corresponding to contact contact divisors $D_i \in |\theta_i|$  such that $2D_i\in |K_C|$ form a syzygetic tetrad, if and only if there exists a quadric $Q\in \mbox{Sym}^2 H^0(C, K_C)$, such that
$$Q\cdot C=D_1+D_2+D_3+D_4.$$
For instance, when $g=3$, four bitangents to a quartic $C\subset \PP^2$ form a syzygetic tetrad exactly when the $8$ points of tangency are the complete intersection of $C$ with a conic.
\end{example}
To understand the geometry of the configuration $\mbox{Th}(C)$ one defines appropriate systems of coordinates. Following \cite{Kr} p. 283,
one says that $2g+2$ theta characteristics $\{\theta_1, \ldots, \theta_{2g+2}\}$ form a \emph{fundamental system} if any triple $\{\theta_i, \theta_j, \theta_j\}$ where $1\leq i<j<k\leq 2g+2$ is azygetic. The sum of the $2g+2$ elements of a fundamental system equals $0$, see \cite{Kr} p. 284, and the number of fundamental systems in $\mbox{Th(C)}$ is known, cf. \cite{Kr} p. 285:
$$2^{2g}\frac{|\mathrm{Sp}(\FF_2^{2g})|}{(2g+2)!}=2^{2g}\frac{(2^{2g}-1)(2^{2g-2}-1)\cdots (2^2-1)}{(2g+2)!}2^{g^2}.$$

\begin{example}
A smooth plane quartic $C\subset \PP^2$ has precisely $288$ fundamental systems $(\theta_1, \ldots, \theta_8)$, where the first $7$ theta characteristics are odd, and $\theta_8=-\sum_{i=1}^7 \theta_i$ is then necessarily even. All elements in $\mbox{Th}(C)$ can be expressed in the "coordinate system" given by $(\theta_1, \ldots, \theta_7)$: The remaining odd theta characteristics are  $\theta_i+ \theta_j+ \theta_k+ \theta_l+ \theta_m$. The $35=36-1$ even theta characteristics are of the form $\theta_i+ \theta_j+ \theta_k$.
\end{example}
One can also consider systems of syzygetic theta characteristics, that is subsets, $\{\theta_1, \ldots, \theta_{r+1}\}\subset \mbox{Th}(C)$ such that all
triples $\{\theta_i, \theta_j, \theta_k\}$ are syzygetic. Then $r\leq g$, see \cite{Kr} p. 299. Following Frobenius \cite{Fr1}, a maximal system of such theta characteristics is called a \emph{G\"opel system} and corresponds to $2^g$ theta characteristics such any three of them form a syzygetic triple. These definitions can be immediately extended to cover general principally polarized abelian varieties not only Jacobians.
\section{Theta characteristics: the classical view via theta functions}
We now link the realization  of theta characteristics in abstract finite geometry, to the theory of theta functions with characteristics. There are established classical references, above all \cite{Kr}, \cite{Wi}, \cite{Ba}, \cite{Cob}, as well as modern ones, for instance \cite{BL}, \cite{SM}. We fix an integer $g\geq 1$ and denote by
$$\mathfrak{H}_g:=\{\tau\in M_{g, g}(\mathbb C): \tau=^t\tau, \ \mbox{Im }\tau>0\}$$ the Siegel upper half-space
of period matrices for abelian varieties of dimension $g$; hence $\cA_g:=\mathfrak{H}_g/\mathrm{Sp}_{2g}(\mathbb Z)$ is the moduli space of principally polarized abelian varieties of dimension $g$. For a vector $\Bigl[\begin{array}{c}
\epsilon\\
\delta\\
\end{array}\Bigr]=\left[\begin{array}{ccc}
\epsilon_1 \ldots \epsilon_g\\
\delta_1 \ldots \delta_g\\
\end{array}\right]\in \FF_2^{2g}$ one defines the \emph{Riemann theta function with characteristics} as the holomorphic function $\vartheta:\mathfrak{H}_g\times \mathbb C^g\rightarrow \mathbb C$ given by
$$\vartheta\left[\begin{array}{c}
\epsilon\\
\delta\\
\end{array}\right](\tau, z):=\sum_{m\in \mathbb Z^g} \mathrm{exp} \Bigl(\pi i \ ^t(m+\frac{\epsilon}{2})\tau (m+\frac{\epsilon}{2})+2\pi i \ ^t(m+\frac{\epsilon}{2})(z+\frac{\delta}{2})\Bigr).$$
For any period matrix $\tau \in \mathfrak{H}_g$, the pair
$$\Bigl[A_{\tau}:=\frac{\mathbb C^g}{\mathbb Z^g+\tau\cdot \mathbb Z^g}, \ \ \Theta_{\tau}:=\Bigl\{z\in A_{\tau}:\vartheta \left[\begin{array}{c}
0\\
0\\
\end{array}\right](\tau, z)=0\Bigr\}\Bigr]$$
defines a principally polarized abelian variety, that is, $[A_{\tau}, \Theta_{\tau}]\in \cA_g$.
There is an identification of symplectic vector spaces $$V:=\FF_2^{2g} \stackrel{\cong}\longrightarrow A_{\tau}[2],\ \mbox{ } \mbox{ given by } \ \mbox{ }\mbox{ }\left[\begin{array}{c}
\epsilon\\
\delta\\
\end{array}\right] \mapsto \frac{\tau\cdot \epsilon+\delta}{2}\in A_{\tau}[2].$$
This isomorphism being understood, points in $A_{\tau}[2]$ were classically called \emph{period characteristics}, see \cite{Kr} Section VII.2.
 The theta function $\vartheta\Bigl[\begin{array}{c}
\epsilon\\
\delta\\
\end{array}\Bigr](\tau, z)$ is the unique section of the translated line bundle $\OO_{A_{\tau}}\Bigl(t_{\frac{\tau\cdot \epsilon+\delta}{2}}^*(\Theta_{\tau})\Bigr)$. Krazer \cite{Kr} goes to great lengths to emphasize the difference between the \emph{period} and the \emph{theta} characteristics, even though at first sight, both sets of characteristics can be identified with the vector space $\FF_2^{2g}$. The reason for this is the transformation formula for theta functions under the action of the symplectic group, see \cite{BL} Theorem 8.6.1.  The theta constants with characteristics are modular forms of weight one half with respect to the subgroup $\Gamma_g(4, 8)$.  To define the action of the full symplectic group $\mathrm{Sp}_{2g}(\mathbb Z)$ on $\FF_2^{2g}$,  we consider the quadratic form associated to a characteristic
$\Delta:=\left[\begin{array}{c}
\epsilon\\
\delta\\
\end{array}\right]$, that is,
$$q_{\left[\begin{array}{c}
\epsilon\\
\delta\\
\end{array}\right]}\Bigl(x, y\Bigr)=x\cdot y+\epsilon\cdot x+ \delta\cdot y,$$
with $(x, y)=(x_1, \ldots, x_g, y_1, \ldots, y_g)\in \FF_2^{2g}$. We define an action of $\mathrm{Sp}_{2g}(\mathbb Z)$ on the set of characteristics which factors through the following action of $\mathrm{Sp}(\FF_2^{2g})$: If $M\in \mathrm{Sp}(\FF_2^{2g})$, set
$$q_{M\cdot \Delta}:=M \cdot q_{\Delta},$$
where we recall that we have already defined $(M\cdot q_{\Delta})(x, y):=q_{\Delta}(M^{-1}(x, y))$. 
It can be shown that this non-linear action of the symplectic group on the set of characteristics is compatible with the transformation rule of theta constants, see also \cite{CDvG}, \cite{BL}. In this context, once more, a theta characteristic $\left[\begin{array}{c}
\epsilon\\
\delta\\
\end{array}\right]$ appears as an element of $Q(V)$. This interpretation makes the link between the classical definition of theta characteristics found in \cite{Kr} and the more modern one encountered for instance in \cite{DO}, \cite{GH}. The algebra of period and theta characteristics was developed by Frobenius \cite{Fr1}, \cite{Fr2} in order to derive general identities between theta constants and describe the structure of the set of such identities.
\vskip 4pt

\subsection{Superstring scattering amplitudes and characteristic calculus.}
Recently, the action of $\mbox{Sp}(\FF_2^{2g})$ on the set of characteristics and the algebra of characteristic systems has been used by Grushevsky \cite{Gr}, Salvati Manni \cite{SM2}, Cacciatori, Dalla Piazza and van Geemen \cite{CDvG} and others, in order to find an explicit formula for the \emph{chiral superstring amplitudes}. This is an important foundational question in string theory and we refer to the cited papers for background
and further references.
Loosely speaking, D'Hoker and Phong  conjectured  that there exists a modular form $\Xi^{(g)}$ of weight $8$ with respect to the group $\Gamma_g(1, 2)\subset \Gamma_g=\mbox{Sp}_{2g}(\mathbb Z)$, satisfying the following two constraints:

\noindent (1) \emph{Factorization:} For each integer $1\leq k\leq g-1$, the following factorization formula
$$\Xi^{(g)}_{| \mathfrak{H}_k\times \mathfrak{H}_{g-k}}=\Xi^{(k)}\cdot \Xi^{(g-k)},$$
holds, when passing to the locus $\mathfrak{H}_k\times \mathfrak{H}_{g-k}\subset \mathfrak{H}_g$ of decomposable abelian varieties.

\noindent (2) \emph{Initial conditions:} For $g=1$, one must recover the standard chiral measure, that is,
$$\Xi^{(1)}=\theta\left[\begin{array}{c}
0\\
0\\
\end{array}\right]^8 \theta \left[\begin{array}{c}
0\\
1\\
\end{array}\right]^4
\theta \left[\begin{array}{c}
1\\
0\\
\end{array}\right]^4.
$$

A unique solution has been found in \cite{CDvG} when  $g=3$ and in \cite{SM2} when $g=4, 5$. This was placed in \cite{Gr} in a general framework that works in principle for arbitrary $g$.

For a set of theta characteristics $W\subset \FF_2^{2g}$, one defines the product of theta constants
$$P_W(\tau):=\prod_{\Delta\in W} \theta[\Delta](\tau, 0).$$
Note that $P_W$ vanishes if $W$ contains an odd characteristic. Then for all integers $0\leq i\leq g$, we set
$$P_i^{(g)}(\tau):=\sum_{W\subset \FF_2^{2g}, \ \mathrm{dim}(W)=i} P_W(\tau)^{2^{4-i}}.$$
It is pointed out in \cite{Gr} Proposition 13, that this function is a modular form of weight $8$ with respect to $\Gamma_g(1, 2)$. This result can be traced back to Frobenius.
Observe that in the definition of $P_i^{(g)}(\tau)$ the only non-zero summands correspond to totally syzygetic systems of characteristics, for else
such a system $W$ contains an odd characteristic and the corresponding term $P_W(\tau)$ is identically zero. Then it is showed that the expression
$$\Xi^{(g)}:=\frac{1}{2^g}\sum_{i=0}^g(-1)^i 2^{i\choose 2} P_i^{(g)}$$
satisfies the factorization rules when $g\leq 5$. Note that for higher $g$ the definition of $\Xi^{(g)}$ leads to a multivalued function due to the impossibility of choosing consistently the roots of unity for the various summands.
\begin{question}
Can one find the superstring amplitudes for higher $g$ by working directly with the space $\ss_g^+$ and constructing via algebro-geometric rather than theta function methods a system of effective divisors of slope (weight) $8$ satisfying the factorization formula?
\end{question}

\section{Cornalba's  moduli space of spin curves}

We now concern ourselves  with describing Cornalba's \cite{C} compactification $\ss_g$ of the moduli space $\cS_g$
of theta characteristics. We recall that $\cS_g$ is the parameter space of pairs $[C, \theta]$, where $C$ is a smooth curve of genus $g$ and $\theta\in \mbox{Th}(C)$. Following Atiyah \cite{At} such a pair is called a \emph{spin curve} of genus $g$. Mumford \cite{Mu} by algebraic means (and Atiyah \cite{At} with analytic methods) showed that the parity of a spin curve is locally constant in families: If $\phi:X\rightarrow S$ is a flat family of smooth curves of genus $g$ and $\mathcal{L}$ is a line bundle on $X$ together with a morphism $\beta:\mathcal{L}^{\otimes 2}\rightarrow \omega_{\phi}$ such that $\phi_s:\mathcal{L}_{X_s}^{\otimes 2}\rightarrow \omega_{X_s}$ is an isomorphism for all $s\in S$,  then the map
$ \mathrm{arf}(\phi):S\rightarrow \FF_2$ defined as  $\mbox{arf}(\phi)(s):=\mathrm{arf}(\mathcal{L}_{X_s})$
 is constant on connected components of $S$. One can speak of the parity of a family of spin curves and according to the value of
$\mbox{arf}(q_{\theta})$, the moduli space $\cS_g$ splits into two connected components $\cS_g^+$ and $\cS_g^-$. The forgetful map $\pi:\cS_g\rightarrow \cM_g$, viewed as a morphism of Deligne-Mumford stacks, is unramified. A compactified moduli space $\ss_g$ should be the coarse moduli space of a Deligne-Mumford stack, such that there is a \emph{finite} morphism $\pi:\ss_g\rightarrow \mm_g$ fitting into a commutative diagram:
$$\xymatrix{
  & \cS_g \ar[d]_{\pi} \ar@{^{(}->}@<-0.5ex>[r] & \ss_g \ar[d]_{\pi}& \\
  & \cM_g \ar@{^{(}->}@<-0.5ex>[r] & \mm_g & \\
}$$
As an algebraic variety, $\ss_g$ is the normalization of $\mm_g$ in the function field of $\cS_g$. It is the main result of \cite{C} that the points
of $\ss_g$ have a precise modular meaning in terms of line bundles on curves belonging to a slightly larger class than that of stable curves.

\begin{definition} A reduced, connected, nodal curve $X$ is called \emph{quasi-stable}, if for any component $E\subset X$ that is isomorphic to $\PP^1$, one has that (i) $k_E:=|E\cap \overline{(X-E)}|\geq 2$, and (ii) any two rational components $E, E'\subset X$ with $k_E=k_{E'}=2$, are disjoint.
\end{definition}
Smooth rational components $E\subset X$ for which $k_E=2$ are called \emph{exceptional}.
The class of quasi-stable curve is a very slight enlargement of the class of stable curves. To obtain a quasi-stable curve, one takes a stable curve $[C]\in \mm_g$ and a subset of nodes $N\subset \mbox{Sing}(C)$ which one "blows-up"; if $\nu:\tilde{C}\rightarrow C$ denotes the normalization map and $\nu^{-1}(n)=\{n^-, n^+\}$, then we define the nodal curve
$$X:=\tilde{C}\cup \Bigl(\bigcup_{n\in N} E_n\Bigr),$$
where $E_n=\PP^1$ for each node $n\in N$ and $E_n\cap (\overline{X-E_n})=\{n^+, n^-\}$. The \emph{stabilization map} $\mbox{st}:X\rightarrow C$ is a partial normalization and contracts all exceptional components $E_n$, that is, $\mbox{st}(E_n)=\{n\}$, for each $n\in N$.

\vskip 3pt
Why extend the class of stable curves, after all $\mm_g$ is already a projective variety?
One can define compactified moduli spaces of theta characteristics working with stable curves alone, if one is prepared to allow sheaves that are not locally free at the nodes of the curves. Allowing semi-stable curves, enables us to view a degeneration of a theta characteristic as a line bundle on a curve that is (possibly) more singular. These two philosophies of compactifying a parameter space of line bundles, namely restricting the class of curves but allowing singularities for the sheaves, vs. insisting on local freeness of the sheaves but enlarging the class of curves, can also be seen at work in the two (isomorphic) compactifications  of the universal degree $d$ Jacobian variety $P_{d, g}$ over $\cM_g$ constructed in \cite{Ca} and \cite{P}. We now describe all points of $\ss_g$:

\begin{definition}\label{spinstructures} A \emph{stable spin curve} of genus
$g$ consists of a triple $(X, \theta, \beta)$, where $X$ is a quasi-stable curve of arithmetic genus $g$, $\theta\in \mathrm{Pic}^{g-1}(X)$ is a line
bundle of total degree $g-1$ such that $\theta_{E}=\OO_E(1)$ for every exceptional component $E\subset X$, and $\beta:\theta^{\otimes
2}\rightarrow \omega_X$ is a sheaf homomorphism which is
not zero along each non-exceptional component of $X$.
\end{definition}
When $X$ is a smooth curve, then $\theta\in \mbox{Th}(X)$ is an ordinary theta characteristics and $\beta$ is an isomorphism. Note that in this definition, the morphism $\beta:\theta^{\otimes 2}\rightarrow \omega_X$ vanishes with order $2$ along each exceptional component $E\subset X$. Cornalba \cite{C} proved that stable spin curves form a projective moduli space $\ss_g$ endowed with a regular stabilization morphism $\pi:\ss_g\rightarrow \mm_g$,  set-theoretically given by $\pi([X, \eta, \beta]):=[C]$; here $C$ is obtained from $X$ by contracting all the exceptional components. The space $\ss_g$ has two connected components $\ss_g^-$ and $\ss_g^+$ depending on the parity $h^0(X, \theta) \mbox{ mod } 2$ of the spin structure.

\begin{definition}\label{thC}
For a stable curve $[C]\in \mm_g$, we denote by $\mbox{Th}(C):=\pi^{-1}([C])$ the zero-dimensional scheme of length $2^{2g}$ classifying stable spin structures on quasi-stable curves whose stable model is $C$.
\end{definition}

The scheme $\mbox{Th}(C)$ has an interesting combinatorial structure that involves the dual graph of $C$. Before describing it for an arbitrary curve $[C]\in \mm_g$, it is helpful to understand the boundary structure of $\ss_g$, which amounts to describe $\mbox{Th}(C)$ when $C$ is a $1$-nodal curve. We shall concentrate on the space $\ss_g^-$ and leave $\ss_g^+$ as an exercise (or refer to \cite{C}, \cite{F2} for details).

The boundary $\mm_g-\cM_g$ of the moduli space of curves decomposes into components $\Delta_0, \ldots, \Delta_{[\frac{g}{2}]}$. A general point of $\Delta_0$ corresponds to a $1$-nodal irreducible curve of arithmetic genus $g$, and for $1\leq i\leq [\frac{g}{2}]$ the general point of $\Delta_i$ is the class of the union of two components of genera $i$ and $g-i$ respectively, meeting transversely at a point.

\subsection{Spin curves of compact type.} We fix an integer $1\leq i\leq [\frac{g}{2}]$ and a general point $[C\cup_y D]\in \Delta_i$, where
$[C, y]\in \cM_{i, 1}$ and $[D, y]\in \cM_{g-i, 1}$ are smooth curves. We describe all stable spin curve $[X, \theta, \beta]\in \pi^{-1}([C\cup_y D])$.
For degree reasons, $X\neq C\cup_y D$, that is, one must insert an exceptional component $E$ at the node $y\in C\cap D$ and then
$$X:=C\cup_{y^+} E\cup_{y^-} D,$$ where
$C\cap E=\{y^+\}$ and $D\cap E=\{y^-\}$.
Moreover $$\theta=\bigl(\theta_C, \theta_D, \theta_E=\OO_E(1)\bigr)\in
\mbox{Pic}^{g-1}(X),$$ and since $\beta_{| E}=0$, it follows that  $\theta_C\in \mbox{Th}(C)$ and  $\theta_D\in \mbox{Th}(D)$, that is, a theta characteristic on a curve of compact type is simply a collection of theta characteristics on each of its (necessarily smooth) components.
The condition that $h^0(X, \theta)$ be odd
implies that $\theta_C$ and $\theta_D$ have
opposite parities. Accordingly, the pull-back divisor $\pi^*(\Delta_i)$ splits in two components depending on the choice of the respective Arf invariants:  We denote by $A_i\subset \ss_g^-$ the closure of
the locus corresponding points for which $\theta_C=\theta_C^-$ and $\theta_D=\theta_D^+$, that is, $\mbox{arf}(\theta_C)=1$ and $\mbox{arf}(\theta_D)=0$.
We denote by $B_i\subset \ss_g^-$ the
closure of the locus of spin curves for which $\mbox{arf}(\theta_C)=0$ and $\mbox{arf}(\theta_D)=1$.
At the level of divisors, the following relation holds $$\pi^*(\Delta_i)=A_i+B_i.$$
Moreover one has that 
$$\mbox{deg}(A_i/\Delta_i)=2^{g-2}(2^i-1)(2^{g-i}+1)\
\mbox{ and }\ \mbox{deg}(B_i/\Delta_i)=2^{g-2}(2^i+1)(2^{g-i}-1).$$

\subsection{Spin curves with an irreducible stable model.} We fix a general $2$-pointed smooth curve  $[C, x, y]\in \cM_{g-1, 2}$ and identify the points $x$ and $y$. The resulting stable curve $\nu:C\rightarrow C_{xy}$, where $C_{xy}:=C/x\sim y$,  corresponds to a general point of the boundary divisor $\Delta_0$. Unlike in the case of curves of compact type, two possibilities do occur,
depending on whether $X$ possesses an exceptional component or not.

If $X=C_{xy}$, then the locally free sheaf $\theta$ is a root of the dualizing sheaf $\omega_{C_{xy}}$. Setting  $\theta_C:=\nu^*(\theta)\in \mbox{Pic}^{g-1}(C)$, from the condition $$H^0(C, \omega_C(x+y)\otimes \theta_C^{\otimes (-2)})\neq 0,$$ by counting degrees, we obtain that $\theta_C^{\otimes 2}=K_C(x+y)$.
For each choice of $\theta_C\in \mathrm{Pic}^{g-1}(C)$ as above, there
is precisely one choice of gluing the fibres $\theta_C(x)$ and
$\theta_C(y)$ in a way that  if $\theta$ denotes the line bundle on $C_{xy}$ corresponding to this gluing, then $h^0(X, \theta)$ is odd.
Let $A_0$ denote the closure in $\ss_g^-$ of the locus of such points. Then
$\mbox{deg}(A_0/\Delta_0)=2^{2g-2}$. Since we expect the fibre $\mbox{Th}(C_{xy})$ to consist of $2^{2g}$ points (counting also multiplicities), we see that one cannot recover all stable spin curves having $C_{xy}$ as their stable model by considering square roots of the dualizing sheaf of $C_{xy}$ alone. The remaining spin curves correspond to sheaves on $C_{xy}$ which are not locally free at the node, or equivalently, to spin structures on a strictly quasi-stable curve.
\vskip 3pt

Assume now that $X=C\cup_{\{x, y\}} E$, where $E$ is an exceptional component. Since $\beta_{| E}=0$
it follows that $\beta_{| C}\in H^0(C, \omega_{X| C}\otimes \theta_C^{\otimes (-2)})$ must vanish at both
$x$ and $y$ and then for degree reasons $\theta_C\in \mbox{Th}(C)$. For parity reasons, $\mbox{arf}(\theta_C)=1$.
We denote by $B_0$ the closure in $\ss_g^-$ of the locus of such points.
A local analysis carried out in \cite{C} shows that $\pi$ is simply ramified over $B_0$. Since $\pi:\ss_g^-\rightarrow \mm_g$ is not ramified along any other divisors of $\ss_g^-$,  one deduces that $B_0$ is the ramification divisor of the forgetful map and  the following relation
holds:
$$
\pi^*(\Delta_0)=A_0+2B_0.
$$
A general point of $B_0$ is determined by specifying an odd theta characteristic on $C$, thus  $\mbox{deg}(B_0/\Delta_0)=2^{g-2}(2^{g-1}-1)$. By direct calculation one checks that
$$\mbox{deg}(A_0/\Delta_0)+2\mbox{deg}(B_0/\Delta_0)=2^{g-1}(2^g-1),$$
which confirms that $\pi$ is simply ramified along $B_0$.

\vskip 4pt

After this preparation, we are now ready to tackle the case of an arbitrary stable curve $C$. We denote by $\nu:\tilde{C}\rightarrow C$ the normalization map and by $\Gamma_C$ the \emph{dual graph} whose vertices are in correspondence with components of $C$, whereas an edge of $\Gamma_C$ corresponds to a node which lies at the intersection of two components (note that self-intersections are allowed). A set of nodes $\Delta\subset \mbox{Sing}(C)$ is said to be \emph{even}, if for any component $Y\subset C$, the degree $|\nu^{-1}(Y\cap \Delta)|$ is an even number. For instance, if $C:=C_1\cup C_2$ is a union of two smooth curves meeting transversally, a set of nodes $\Delta\subset C_1\cap C_2$ is even if and only if $|\Delta|\equiv 0\mbox{ mod } 2$.

\begin{remark} Assume $[X, \theta, \beta]\in \ss_g$ and let $\mathrm{st}:X\rightarrow C$ be the stabilization morphism. If $N\subset \mbox{Sing}(C)$ is the set of exceptional nodes of $C$, that is, nodes $n\in N$ having the property that
$\mathrm{st}^{-1}(n)=E_n=\PP^1$, and we write that $\mbox{Sing}(C)=N\cup \Delta$, then the set $\Delta$ of non-exceptional nodes
of $C$ is even, see both \cite{C} and \cite{CC}.
\end{remark}

One has the following description of the scheme $\mbox{Th}(C)$, cf. \cite{CC} Proposition 5:
\begin{proposition}\label{components}
Let $[C]\in \mm_g$ and $b:=b_1(\Gamma_C)$ be the Betti number of the dual graph. Then the number of components of the zero-dimensional scheme $\mathrm{Th}(C)$
is equal to
$$2^{2g-2b}\cdot \Bigl(\sum_{\Delta\subset \mathrm{Sing}(C), \Delta\ \mathrm{ even}} 2^{b_1(\Delta)}\Bigr).$$
A component corresponding to an even set $\Delta\subset \mathrm{Sing}(C)$ appears with multiplicity $2^{b-b_1(\Delta)}$.
\end{proposition}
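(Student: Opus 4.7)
The plan is to stratify $\mathrm{Th}(C)$ according to the even subset $\Delta\subset\mathrm{Sing}(C)$ of non-exceptional nodes of the corresponding quasi-stable curve, count distinct (reduced) points in each stratum via square roots on a partial normalization, and separately determine the scheme-theoretic multiplicity of each point by a local deformation analysis of $\pi\colon \ss_g\to\mm_g$.

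\textbf{Stratification and reduction to $Y_\Delta$.} By the Remark preceding the statement, every point $[X,\theta,\beta]\in \mathrm{Th}(C)$ determines an even set $\Delta:=\mathrm{Sing}(C)\setminus N$ of non-exceptional nodes, where $N$ is the set of nodes blown up to produce $X$. Fix such a $\Delta$, and write $X_\Delta=Y_\Delta\cup \bigcup_{n\in N}E_n$, where $Y_\Delta\to C$ is the partial normalization at $N$, itself a nodal curve of arithmetic genus $g-|N|$ whose nodes are exactly those in $\Delta$. The first step is to establish a bijection between the distinct reduced points of $\mathrm{Th}(C)$ supported on $X_\Delta$ and the set of square roots of $\omega_{Y_\Delta}$. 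Indeed, $\theta|_{E_n}=\OO(1)$ is prescribed by Definition~\ref{spinstructures}; the restriction $\beta|_{E_n}\colon \OO(2)\to \omega_{X_\Delta}|_{E_n}=\OO_{E_n}$ automatically vanishes for degree reasons; and combining $\omega_{X_\Delta}|_{Y_\Delta}=\omega_{Y_\Delta}\bigl(\sum_{n\in N}(x_n^++x_n^-)\bigr)$ with the forced simple vanishing of $\beta|_{Y_\Delta}$ at each $x_n^\pm$ yields $\theta_Y^{\otimes 2}\cong \omega_{Y_\Delta}$ for $\theta_Y:=\theta|_{Y_\Delta}$. Conversely, every such $\theta_Y$ extends to a stable spin curve on $X_\Delta$ uniquely up to the automorphisms of $X_\Delta$ over $C$ generated by the involutions on each $E_n$, which absorb the ambiguity in the gluings at $\{x_n^+,x_n^-\}$.

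\textbf{Counting reduced points.} Since $\omega_{Y_\Delta}$ has even degree on each connected component and $\mathrm{Pic}^0$ of a nodal curve is divisible, the set of square roots of $\omega_{Y_\Delta}$ is a nonempty torsor under $J_2(Y_\Delta)$. From the standard extension
$$
1\longrightarrow (\mathbb{G}_m)^{b_1(\Gamma_{Y_\Delta})}\longrightarrow \mathrm{Pic}^0(Y_\Delta)\longrightarrow \mathrm{Pic}^0(\widetilde{Y}_\Delta)\longrightarrow 0,
$$
together with the identifications $\widetilde{Y}_\Delta=\widetilde{C}$ (of total geometric genus $g-b$) and $\Gamma_{Y_\Delta}=$ the subgraph of $\Gamma_C$ spanned by the $\Delta$-edges (of Betti number $b_1(\Delta)$), one obtains $|J_2(Y_\Delta)|=2^{2(g-b)+b_1(\Delta)}$. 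Summing over even subsets then yields $2^{2g-2b}\sum_\Delta 2^{b_1(\Delta)}$, which is the claimed number of components.

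\textbf{Multiplicities --- the main obstacle.} The length of the local ring of $\mathrm{Th}(C)$ at each reduced point requires a formal-local analysis of $\pi$. The naive model $s_n^2=t_n$ over each $n\in N$ would predict multiplicity $2^{|N|}$, but this overcounts: the requirement that the spin structure extend along a smoothing of $C$ couples the smoothings around every cycle of $\Gamma_C$ that uses an $N$-edge, killing a specific subgroup of sign-changes. The correct length is $2^{b-b_1(\Delta)}$, the exponent equaling the codimension of the injection $H_1(\Gamma_{Y_\Delta};\FF_2)\hookrightarrow H_1(\Gamma_C;\FF_2)$. This combinatorial/deformation-theoretic identification is the hardest step and is precisely the analysis carried out in \cite{C},\cite{CC}. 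As a consistency check, since the even edge-subsets of $\Gamma_C$ form the cycle space $H_1(\Gamma_C;\FF_2)$ of cardinality $2^b$, one has
$$
\sum_{\Delta\text{ even}}2^{b-b_1(\Delta)}\cdot 2^{2g-2b+b_1(\Delta)}\;=\;2^b\cdot 2^{2g-b}\;=\;2^{2g}\;=\;\deg(\pi),
$$
matching the generic degree of the forgetful map and confirming internal consistency of the formula.
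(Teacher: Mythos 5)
First, a point of calibration: the paper does not prove this proposition at all --- it is quoted verbatim from \cite{CC} (Proposition 5), so there is no internal argument of the paper to measure you against. Your stratification by the even set $\Delta$ of non-blown-up nodes, the reduction to square roots of $\omega_{Y_\Delta}$ on the partial normalization, and the count $2^{2(g-b)+b_1(\Delta)}$ per stratum form the right skeleton, and they reproduce the numbers the paper checks in its examples (e.g.\ the $3\cdot 2^{2g-2}$ components for a $1$-nodal irreducible curve). The genuine gap is the multiplicity statement $2^{b-b_1(\Delta)}$, which is a full half of the proposition and, as you yourself say, the hardest step: you assert the answer, offer a heuristic about sign-changes being ``killed'' around cycles of $\Gamma_C$, and then defer to ``precisely the analysis carried out in \cite{C},\cite{CC}''. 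As a proof of the statement this is circular; what must actually be supplied is the formal-local description of $\pi$ at a spin curve with exceptional components (the picture $t_n=s_n^2$ over each blown-up node, combined with the identifications coming from automorphisms of $(X,\theta,\beta)$ acting by sign changes on the parameters $s_n$), from which the length of the local ring of the fibre is computed. Your consistency check $\sum_{\Delta}2^{b-b_1(\Delta)}\cdot 2^{2g-2b+b_1(\Delta)}=2^{2g}$ is reassuring but cannot determine the individual multiplicities.

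A second, smaller gap: you justify non-emptiness of the set of square roots of $\omega_{Y_\Delta}$ by ``even degree on each connected component'' plus divisibility of $\mathrm{Pic}^0$. That is not sufficient: a line bundle on a nodal curve has a square root if and only if its \emph{multidegree} is even on every irreducible component (multidegree $(1,1)$ on two rational curves meeting in two points has even total degree and no square root). In your situation the evenness of the multidegree of $\omega_{Y_\Delta}$ is exactly equivalent to $\Delta$ being an even set of nodes --- this is precisely where the evenness hypothesis enters the count, and it should be made explicit rather than replaced by the connected-component statement. Finally, the assertions that all gluings along the exceptional components yield isomorphic spin curves and that distinct square roots $\theta_Y$ give distinct points of the fibre are correct and standard, but they too are stated without argument and belong to the verification.
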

One can easily verify that the length of the scheme $\mbox{Th}(C)$ is indeed $2^{2g}$. From Proposition \ref{components} it follows for instance that $\mbox{Th}(C)$ is a reduced scheme if and only if $C$ is of compact type. In the case we studied above, when $C$ is irreducible with a single node, Proposition \ref{components} gives that $\mbox{Th}(C)=\mbox{Th}(C)_-\cup \mbox{Th}(C)_+$ has $3\cdot 2^{2g-2}$ irreducible components. Precisely, $|\mbox{Th}(C)_-|=2^{2g-2}+|\mbox{Th}(\tilde{C})_-|$ and $|\mbox{Th}(C)_+|=2^{2g-2}+|\mbox{Th}(\tilde{C})_+|$.
\begin{remark}
The structure of the schemes $\mbox{Th}(C)$ for special singular curves $C$ has been used in \cite{Lud} to describe the singularities of $\ss_g$ and in \cite{CS} to prove that a general curve $[C]\in \cM_g$ is uniquely determined by the set of contact hyperplanes $$\bigl\{\langle D\rangle \in (\PP^{g-1})^{\vee}: D\in |\theta|, \ \theta\in \mathrm{Th}(C)_-\bigr\}.$$
In spite of these important applications, a systematic study of the finite geometry of the set $\mbox{Th}(C)$ when $C$ is singular (e.g. a theory of fundamental and G\"opel systems, syzygetic tetrads) has not yet been carried out and is of course quite interesting.
\end{remark}

\vskip 3pt
\subsection{The canonical class of the spin moduli space.}
It is customary to denote the divisor classes in the Picard group of the moduli stack by
\begin{center}
\fbox{$\alpha_i:=[A_i], \ \ \beta_i:=[B_i]\in \mbox{Pic}(\ss_g^-), \ \mbox{  } i=0, \ldots, [\frac{g}{2}].$}
\end{center}
A result of Putman's \cite{Pu} shows that for $g\geq 5$, the divisor classes
$\lambda$, $\alpha_0$, $\beta_0, \ldots, \alpha_{[\frac{g}{2}]}$, $\beta_{[\frac{g}{2}]}$ freely generate the rational Picard group $\mbox{Pic}(\ss_g^-)$. A similar result holds for $\ss_g^+$.

The space $\ss_g^-$ is a normal variety with finite quotient singularities; an \'etale neighbourhood of an arbitrary point $[X, \eta, \beta]\in \ss_g$ is of the form $$H^0(C, \omega_C\otimes \Omega_C)^{\vee}/\mbox{Aut}(X, \eta, \beta)=\mathbb C^{3g-3}/\mbox{Aut}(X, \eta, \beta),$$
where $H^0(C, \omega_C\otimes \Omega_C)$ can be identified via deformation theory with the cotangent space to the moduli stack $\mm_g$ at the point $[C]:=\pi([X, \eta, \beta])$.  For a (predictable) definition of an automorphism of a triple $(X, \eta, \beta)$, we refer to \cite{C} Section 1.
\vskip 3pt

Using Kodaira-Spencer deformation theory, one describes the cotangent bundle of the stack $\mm_g$ as the push-forward of a rank $1$ sheaf on the universal curve over $\mm_g$. A famous and at the time, very innovative use in \cite{HM} of the Grothendieck-Riemann-Roch theorem for the universal curve, yields the formula $$K_{\mm_g}\equiv 13\lambda-2\delta_0-3\delta_1-2\sum_{i=2}^{[\frac{g}{2}]} \delta_i\in \mbox{Pic}(\mm_g).$$
From the Riemann-Hurwitz theorem applied to the finite branched cover $\pi:\ss_g^-\rightarrow \mm_g$, we find the formula for the canonical class of the spin moduli stack:

\begin{center}
\fbox{
$K_{\ss_g^-}\equiv 13\lambda-2\delta_0-3\beta_0-3(\alpha_1+\beta_1)-2\sum_{i=2}^{[\frac{g}{2}]} (\alpha_i+\beta_i)\in \mbox{Pic}(\ss_g^-)$.}
\end{center}

An identical formula holds for $\ss_g^+$. Unfortunately both spaces $\ss_g^+$ and $\ss_g^-$ have non-canonical singularities, in particular there exist \emph{local} obstructions to extending pluri-canonical forms defined on the smooth part of $\ss_g$ to a resolution of singularities. However, an important result of Ludwig \cite{Lud} shows that this obstructions are not of global nature. The following result holds for both $\ss_g^+$ and $\ss_g^-$:
\begin{theorem}(Ludwig) For $g\geq 4$ fix a resolution of singularities $\epsilon:\widetilde{\cS}_g\rightarrow \ss_g$. Then for any integer $\ell\geq 0$ there exists an isomorphism of vector spaces
$$\epsilon^*: H^0(\ss_g, K_{\ss_{g, \mathrm{reg}}}^{\otimes \ell})\stackrel{\cong}\longrightarrow H^0(\widetilde{\cS}_g, K_{\widetilde{\cS}_g}^{\otimes \ell}).$$
\end{theorem}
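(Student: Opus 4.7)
The plan is to follow the Harris--Mumford extension strategy, adapted to accommodate the additional automorphisms introduced by the spin structure. By the \'etale-local description recalled above, a neighbourhood of $[X,\eta,\beta]\in\ss_g$ looks like $\mathbb{C}^{3g-3}/G$, where $G=\mathrm{Aut}(X,\eta,\beta)$ acts through its natural representation on the cotangent space $H^0(C,\omega_C\otimes \Omega_C)^{\vee}$. Any $\ell$-th pluricanonical form on the regular locus pulls back to a $G$-invariant pluricanonical form on $\mathbb{C}^{3g-3}$, and the classical Reid--Tai criterion guarantees an extension across any resolution of such a quotient singularity provided every non-identity element of $G$ acts with eigenvalues $e^{2\pi i a_j/r}$ satisfying $\sum_j \{a_j/r\}\geq 1$. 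The task therefore splits into two parts: (i) enumerate the elements of $\mathrm{Aut}(X,\eta,\beta)$ that violate Reid--Tai, and (ii) show that for each such element the pluricanonical form necessarily vanishes to the order required to kill the obstruction.

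For step (i), I would first invoke the Harris--Mumford classification for $\mm_g$ in the range $g\geq 4$: the only curve automorphisms violating Reid--Tai come from elliptic-tail automorphisms (the order-$6$ automorphism of an elliptic tail with $j=0$, the order-$4$ one with $j=1728$, and a small list of related exceptions). On top of these one must add the spin-specific elements, most notably the involution of an exceptional component $E\cong\PP^1$ fixing the two attaching points: this acts trivially on $C=\mathrm{st}(X)$ but multiplies $\beta_{|E}$ by $-1$, and analogously its coupling with an automorphism of the adjacent component exchanging the two branches at the node. For each such element one reads off the eigenvalues on the cotangent space using Kodaira--Spencer: the smoothing parameter of the node supporting $E$ acquires a non-trivial weight, while the remaining parameters are invariant, so that the Reid--Tai sum can be computed explicitly.

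For step (ii), the elliptic-tail singularities are handled by the classical Harris--Mumford elliptic tail lemma, which forces the restriction of a pluricanonical form to $\overline{\mathcal{M}}_{1,1}$ to be divisible by the equation of the $j=0$ point to the multiplicity needed to cancel the Reid--Tai defect. The same argument transports to the boundary divisors $A_1, B_1$ of $\ss_g$ once one tracks the spin data through the degeneration. The genuinely new, spin-theoretic non-canonical singularities constitute the main technical obstacle: an explicit local computation in suitable \'etale coordinates has to show that the dangerous element of $G$ concentrates its bad eigenvalue along a direction transverse to the boundary divisor $B_0$ (or $A_i,B_i$) supporting that singularity, so that the invariance condition under $G$ forces vanishing of the form along this direction at the required order. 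Once this vanishing is verified on every stratum of codimension at most two, a standard reflexivity argument --- the pluricanonical sheaf on any resolution is reflexive and hence determined by its restriction to the complement of any codimension-two subset --- finishes the proof uniformly for all $\ell\geq 0$. The hardest point of the argument is precisely the spin-involution analysis on $B_0$, which has no analogue in $\mm_g$ and requires a direct deformation-theoretic computation on the non-locally-free sheaves appearing on a singular stable model.
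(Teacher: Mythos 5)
The paper itself offers no proof of this statement: it is quoted from Ludwig's paper \cite{Lud}, so your sketch can only be measured against her argument. In outline you have the right architecture (\'etale-local quotient description, Reid--Tai, classification of the offending automorphisms, a Harris--Mumford-type vanishing, and a codimension-two/reflexivity argument to conclude for all $\ell$), and this is indeed the shape of Ludwig's proof. But two of your concrete claims are off in ways that matter. First, the element you single out as the ``genuinely new'' source of trouble --- the involution of an exceptional component $E\cong \PP^1$ fixing the two attaching points --- is a quasi-reflection: it acts by $-1$ on the single coordinate smoothing the corresponding node and trivially on all others, so the quotient by it is smooth. It produces no singularity at all; it is exactly what accounts for the simple ramification of $\pi:\ss_g\rightarrow \mm_g$ along $B_0$ recorded in the paper (the local coordinate downstairs is the square of the one upstairs). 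In the Reid--Tai analysis quasi-reflections must be factored out first, and the genuinely new non-canonical singularities of $\ss_g$ come instead from lifts of the elliptic-tail automorphisms of order $4$ and $6$ to spin curves in which the elliptic-tail node is blown up: on the square-rooted smoothing coordinate such a lift acts by a primitive $8$th or $12$th root of unity, which is what pushes the Reid--Tai sum below $1$ in configurations that were harmless on $\mm_g$. Your list of dangerous elements therefore has to be redone before step (ii) can even start.

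Second, and more seriously, your mechanism for step (ii) in the spin case --- that ``the invariance condition under $G$ forces vanishing of the form along this direction at the required order'' --- cannot work as stated. If local $G$-invariance alone implied the required vanishing, the singularity would be canonical; but the paper (and Ludwig) state explicitly that $\ss_g^{\pm}$ have non-canonical singularities, i.e.\ there \emph{are} local obstructions to extending pluricanonical forms. What saves the day is a global statement: an adaptation of the Harris--Mumford elliptic-tail lemma to spin curves, showing that any pluricanonical form on $\ss_{g,\mathrm{reg}}$, restricted to the relevant family of spin curves with elliptic tails (essentially a copy of a moduli space of $1$-pointed spin elliptic curves fibered over the non-canonical locus), automatically vanishes to the order needed to cancel the Reid--Tai defect; Ludwig verifies this by an explicit computation on a concrete (partial) resolution over that locus. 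Your proposal gestures at the elliptic-tail lemma for the classical singularities but replaces it by the incorrect local-invariance argument precisely where the new spin-theoretic work is required, and it defers that computation as ``the hardest point.'' As it stands the proposal is a plausible program whose key step is both misplaced (wrong locus, wrong group elements) and unproved.
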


Therefore, in order to conclude that  $\ss_g$ is of general type, it suffices to show that the canonical class $K_{\ss_g}$ lies in the interior of the effective cone $\mbox{Eff}(\ss_g)$ of divisors, or equivalently, that it can be expressed as a positive linear combination of an ample and an effective class on $\ss_g$. This becomes a question on slopes of the effective cone of $\ss_g$, which can be solved in a spirit similar to
\cite{HM} and \cite{EH}, where it has been proved with similar methods that $\mm_g$ is of general type for $g\geq 24$. The standard references for effective divisors on moduli spaces of stable curves are \cite{HM}, \cite{EH}, \cite{Log} and \cite{F4}.

\section{Effective divisors on $\ss_g$}

In the papers \cite{F2}, \cite{F3} and \cite{FV1} we have initiated a study of effective divisors on $\ss_g$. The class of the locus $\thet$ of vanishing theta-nulls on $\ss_g^+$ is computed in \cite{F2}; in the paper \cite{FV1} we study the space $\ss_g^-$ with the help of the divisor of spin curves with an everywhere tangent hyperplane in the canonical embedding having higher order contact than expected. Finally in \cite{F3} we define effective divisors of Brill-Noether type in more general setting on both spaces $\ss_g^-$ and $\ss_g^+$. We survey these constructions, while referring to the respective papers for technical details.

We begin with the moduli space $\ss_g^-$: An odd theta characteristic $\theta\in \mbox{Th}(C)$ with $h^0(C, \theta)=1$ determines a unique effective divisor $D\in C_{g-1}$ such that $\theta=\OO_C(D)$. We write in this case that $D=\mbox{supp}(\theta)$.
The assignment $(C, \theta)\mapsto \bigl(C, \mbox{supp}(\theta)\bigr)$ can be viewed as a rational map between moduli spaces
$$\ss_g^-\dashrightarrow \cc_{g, g-1}:=\mm_{g, g-1}/\mathfrak{S}_{g-1},$$
and it is natural to use this map and the well-understood divisor theory \cite{Log} on the universal symmetric product $\cc_{g, g-1}$, in order to obtain promising effective divisors on $\ss_g^-$. In particular, the boundary divisor $\Delta_{0:2}$ on $\cc_{g, g-1}$ with general point being a  pair $(C, D)$ where $D\in C_{g-1}$ is a divisor with non-reduced support, is known to be extremal. Its pull-back to $\ss_g^-$  parametrizes (limits of) odd spin curves $[C, \theta]\in \cS_g^-$ such that there exists a point $x\in C$ with $H^0(C, \theta(-2x))\neq 0$. The calculation of the class of the closure of this locus is one of the main results of \cite{FV1}:
\begin{theorem}\label{degen}
We fix $g\geq 3$. The locus consisting of odd spin curves
$$\cZ_g:=\bigl\{[C, \theta]\in \cS_g^-:\theta=\OO_C\bigl(2x_1+\sum_{i=2}^{g-2} x_i\bigr),\ \mbox{ with } x_i\in C \mbox{ for } i=1, \ldots,  g-2 \bigr\}$$
is a divisor on $\cS_g^-$. The class of its compactification inside $\ss_g^-$ equals
$$[\overline{\cZ}_g]= (g+8)\lambda-\frac{g+2}{4}\alpha_0-2\beta_0-\sum_{i=1}^{[\frac{g}{2}]} 2(g-i)\ \alpha_i-\sum_{i=1}^{[\frac{g}{2}]} 2i\ \beta_i\in \mathrm{Pic}(\ss_g^-).$$
\end{theorem}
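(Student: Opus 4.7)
The plan, motivated by the rational map $\phi\colon\cS_g^-\dashrightarrow\cc_{g,g-1}$ sending $(C,\theta)\mapsto(C,\mathrm{supp}(\theta))$ and Logan's divisor theory on universal symmetric products, is to realize $\overline{\cZ}_g$ as the image under the universal-curve projection of a Porteous degeneracy locus, compute the interior part by Chern-class machinery, and then fix the boundary coefficients via standard test curves.

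Let $f\colon\cC\to\ss_g^-$ be the universal quasi-stable spin curve, with universal spin line bundle $\mathcal{P}$ satisfying $\mathcal{P}^{\otimes 2}\equiv\omega_f$ modulo the class of exceptional components. Over the dense open $\cU\subset\ss_g^-$ where $h^0(\theta)=1$, the pushforward $\E:=f_*\mathcal{P}$ is a line bundle, the universal section gives $f^*\E\to\mathcal{P}$, and taking relative $1$-jets produces a canonical morphism
$$\sigma\colon f^*\E\longrightarrow J^1_f(\mathcal{P})$$
from a line bundle into the rank-two first jet bundle. A point $x\in\cC$ lies in $Z(\sigma)$ precisely when the unique section of $\theta$ vanishes to order at least two at $x$, so $f\bigl(Z(\sigma)\bigr)=\cZ_g\cap\cU$.

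By Porteous, $[Z(\sigma)]=c_2\bigl(J^1_f(\mathcal{P})\otimes f^*\E^{-1}\bigr)$. Pushing down by $f$ using the jet exact sequence $0\to\omega_f\otimes\mathcal{P}\to J^1_f(\mathcal{P})\to\mathcal{P}\to 0$ together with the projection formula yields
$$[\overline{\cZ}_g]=f_*c_2\bigl(J^1_f(\mathcal{P})\bigr)-c_1(\E)\cdot f_*c_1\bigl(J^1_f(\mathcal{P})\bigr).$$
Substituting $c_1(\mathcal{P})\equiv\frac{1}{2}c_1(\omega_f)$ (up to boundary corrections coming from exceptional components), Mumford's relation $f_*c_1(\omega_f)^2=12\lambda-\delta$, and computing $c_1(\E)$ by Grothendieck--Riemann--Roch applied to $f_*\mathcal{P}$, one recovers the coefficients $g+8$ of $\lambda$ and $\frac{g+2}{4}$ of $\alpha_0$; the fractional denominator reflects the $\mathbb Z/2$-stabilizer of a generic nodal spin curve.

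The remaining boundary coefficients $-2\beta_0$, $-2(g-i)\alpha_i$ and $-2i\beta_i$ are determined by the standard test curves in $\ss_g^-$: for $i\geq 1$, the families obtained by gluing a fixed pointed spin curve of genus $i$ to a varying pointed spin curve of genus $g-i$, separately for the two parity assignments distinguishing $A_i$ from $B_i$; and for $i=0$, a Lefschetz pencil of plane curves together with the $\beta_0$-transverse family obtained by blowing up a node. Each intersection $T_i\cdot\overline{\cZ}_g$ is computed directly by limit linear series with ramification conditions at the nodes, producing a linear system whose solution pins down the coefficients. The asymmetric $2(g-i)$ versus $2i$ arises because on $A_i$ the odd spin lives on the genus-$i$ side, forcing $g-i$ base points of $D\in|\theta|$ to accumulate on the genus-$(g-i)$ side, with the roles swapped on $B_i$.

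The main obstacle is precisely this boundary analysis along $\beta_0$ and $\alpha_i,\beta_i$ for $i\geq 1$. At these divisors the quasi-stable spin model contains an exceptional $\PP^1$ on which every section of $\mathcal{P}$ vanishes identically, so $f_*\mathcal{P}$ acquires jumping rank and the Porteous map $\sigma$ degenerates, preventing a direct Chern-class reading of the multiplicity with which $\overline{\cZ}_g$ contains each such divisor. Extracting this multiplicity requires Cornalba's local smoothing model \cite{C} for strictly quasi-stable spin curves together with an admissible-cover style analysis of limit theta characteristics; verifying consistency between the interior jet-bundle computation and the boundary test-curve computation, and correctly accounting for stack automorphisms at each boundary stratum, is the crux of the argument.
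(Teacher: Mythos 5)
First, a caveat: this survey does not actually prove Theorem~\ref{degen}; the statement is quoted from \cite{FV1}, and the text around it only indicates the motivating idea (realizing $\cZ_g$ as the pull-back of the extremal divisor $\Delta_{0:2}$ under $\ss_g^-\dashrightarrow \cc_{g,g-1}$). So your proposal can only be judged on its own terms. Its interior part is sound and standard: pushing forward the Porteous class $c_2\bigl(J^1_f(\mathcal{P})\otimes f^*\E^{\vee}\bigr)$, using $c_1(\mathcal{P})=\tfrac12 c_1(\omega_f)$ away from exceptional components, Mumford's relation and GRR (taking $R^1f_*\mathcal{P}\cong (f_*\mathcal{P})^{\vee}$ into account, which you omit) does reproduce $(g+8)\lambda-\tfrac{g+2}{4}\alpha_0$; note, however, that the $\alpha_0$-coefficient comes from the $\delta_0$-terms in GRR and in $f_*c_1(\omega_f)^2$ over $A_0$ (where no exceptional component is present), not from a ``$\mathbb{Z}/2$-stabilizer,'' and it additionally requires checking that $Z(\sigma)$ acquires no excess components supported over the nodes of fibres above $A_0$, where the relative jet sheaf is delicate.

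The genuine gaps are twofold. First, the theorem asserts that $\cZ_g$ \emph{is} a divisor, i.e.\ that the generic odd theta characteristic has reduced support and that the degeneracy locus is generically reduced so that $f_*[Z(\sigma)]$ equals $[\overline{\cZ}_g]$ with multiplicity one; you never address this, and without it the Porteous computation only bounds a class. Second, and decisively, the coefficients of $\beta_0$ and of $\alpha_i,\beta_i$ for $i\geq 1$ --- which are the real content of the formula beyond a routine Chern-class exercise --- are merely asserted to follow from ``standard test curves'' and ``limit linear series with ramification.'' Along $B_0$, $A_i$, $B_i$ the quasi-stable model carries an exceptional component, $\mathcal{P}^{\otimes 2}\not\cong\omega_f$, the pushforward sheaf jumps, and one must actually describe the limits of the non-reduced supporting divisors (limit theta characteristics on the two sides, with prescribed parities and vanishing at the node) and compute the intersection numbers of $\overline{\cZ}_g$ with explicit test families; your text names this as ``the crux'' but supplies none of it, so the boundary coefficients $-2\beta_0$, $-2(g-i)\alpha_i$, $-2i\beta_i$ remain unproved. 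As it stands the proposal is a plausible outline of the standard strategy (and likely close in spirit to the actual argument of \cite{FV1}), but not a proof.
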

For low genus, $\cZ_g$ specializes to well-known geometric loci. For instance $\cZ_3$ is the divisor of hyperflexes on plane quartics, classifying
pairs $[C, \OO_C(2p)]\in \cS_3^-$, where $p\in C$ is such that $h^0(C, \OO_C(4p))=3$. Then $K_C=\OO_C(4p)$ and $p\in C$ is a hyperflex point.

The divisor $\zz_g$ together with pull-backs of effective divisors on $\mm_g$ can be used to determine the range in which $\ss_g^-$ is of general type. This application also comes from \cite{FV1}:
\begin{theorem}\label{sg-}
The moduli space $\ss_g^-$ is a variety of general type for $g\geq 12$.
\end{theorem}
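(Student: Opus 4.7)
The plan follows the Eisenbud--Harris--Mumford paradigm and has two main stages: a reduction to bigness, followed by an explicit slope calculation. By Ludwig's theorem stated just above (applicable since $g \geq 12 > 4$), any pluricanonical section on the smooth locus of $\ss_g^-$ extends over a resolution of singularities, so to prove that $\ss_g^-$ is of general type it suffices to show that the canonical class $K_{\ss_g^-}$ is big, i.e.\ lies in the interior of the effective cone of $\mathrm{Pic}(\ss_g^-)_\QQ$.

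To exhibit bigness I would aim for a decomposition
$$K_{\ss_g^-} \equiv b\cdot [\zz_g] + c\cdot \pi^*[\overline{D}_g] + a\cdot \pi^*H + \sum_{i\geq 0}\bigl(e_i\alpha_i + f_i\beta_i\bigr),$$
with $a,b,c > 0$ and all $e_i, f_i \geq 0$, where $H$ is an ample class on $\mm_g$ of the Cornalba--Harris type $s\lambda - \delta_0 - \epsilon\sum_{i\geq 1}\delta_i$, and $\overline{D}_g$ is an effective divisor on $\mm_g$ of sufficiently small slope: a Brill--Noether divisor when $g$ is odd, and a Koszul-type divisor from \cite{F4} when $g$ is even. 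Matching the coefficients of $\lambda, \alpha_0, \beta_0, \alpha_i, \beta_i$ in $\mathrm{Pic}(\ss_g^-)$ using the explicit class formula of Theorem \ref{degen} and the Eisenbud--Harris-type class for $\overline{D}_g$ yields a linear system in $a, b, c$ and the boundary slack variables.

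The critical obstacle will be matching the coefficient of $\beta_1$, which equals $-3$ in $K_{\ss_g^-}$ but only $-2$ in $[\zz_g]$; the small $\epsilon$-contribution from $\pi^*H$ is also insufficient, so it is the negative $\delta_1$-coefficient in $\pi^*[\overline{D}_g]$ that must close the gap. The $\alpha_0$-slope of $[\zz_g]$ equals $\tfrac{4(g+8)}{g+2}$, falling below the canonical slope $\tfrac{13}{2}$ already for $g \geq 8$; however, combined feasibility of the equations in the $\beta_0, \beta_1$ directions together with the $\lambda$-normalization $b(g+8) + c\cdot (\lambda\text{-coef of }\overline{D}_g) + as = 13$ sharpens the threshold to $g \geq 12$. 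Once such non-negative coefficients $a, b, c, e_i, f_i$ are found, the identity expresses $K_{\ss_g^-}$ as an ample class plus an effective class, hence $K_{\ss_g^-}$ is big, and Ludwig's theorem implies $\ss_g^-$ is of general type.
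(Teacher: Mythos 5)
Your overall strategy coincides with the paper's: reduce to bigness of $K_{\ss_g^-}$ via Ludwig's extension theorem, then express the canonical class as a positive combination of $[\zz_g]$, the pullback of a small-slope effective divisor on $\mm_g$ (Brill--Noether when available, Koszul-type otherwise), an ample class and boundary terms. However, there is a genuine gap at the borderline genus. The interpolation is pinned down by the $\alpha_0$- and $\beta_0$-coefficients (recall $\pi^*(\delta_0)=\alpha_0+2\beta_0$), and once these are matched to the values $-2$ and $-3$ occurring in $K_{\ss_g^-}$, the resulting $\lambda$-coefficient of the combination is $\frac{11g+37}{g+1}$; the bigness condition $\frac{11g+37}{g+1}<13$ is equivalent to the \emph{strict} inequality $g>12$. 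At $g=12$ the combination has slope exactly $13$, so the identity you propose can at best exhibit $K_{\ss_{12}^-}$ as an effective class on the boundary of the cone you control, not as ample plus effective; your assertion that feasibility ``sharpens the threshold to $g\geq 12$'' is not justified by the linear algebra alone. The paper explicitly flags $g=12$ as a difficult case requiring a separate argument (the last section of \cite{FV1}); without some such additional input your sketch only covers $g\geq 13$ (together with the usual caveat, which you do address, that when the Brill--Noether divisor is unavailable one substitutes divisors from \cite{F4}).

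A secondary inaccuracy: the critical obstruction to using $[\zz_g]$ alone, and the binding constraints in the interpolation, sit at $\beta_0$ (together with $\alpha_0$), not at $\beta_1$. The $\beta_1$-discrepancy you single out is closed with ample slack, since the Brill--Noether class contributes $-i(g-i)\delta_i$, hence a coefficient $-(g-1)$ on $\delta_1$, whose pullback is $\alpha_1+\beta_1$; indeed in the paper's combination the coefficients for $i\geq 1$ automatically come out $\geq 2$, and $>3$ for $i=1$. This does not derail the structure of your argument, but it misplaces where the genus bound actually comes from: it is the interplay of the $\lambda$-, $\alpha_0$- and $\beta_0$-coefficients that produces the inequality $g>12$.
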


The passing from Theorem \ref{degen} to Theorem \ref{sg-} amounts to simple linear algebra. By comparing the class $[\zz_g]$ against that of the canonical divisor, we note that $$K_{\ss_g^-}\notin \mathbb Q_{\geq 0}\Bigl\langle [\overline{\cZ}_g], \lambda, \alpha_i, \beta_i, \ i=0, \ldots, \bigl[\frac{g}{2}\bigr]\Bigr\rangle,$$ for the coefficient of $\beta_0$ in the expression of $[\zz_g]$ is too small. However one can combine $[\zz_g]$ with effective classes coming from $\mm_g$, and there is an ample supply of such, see \cite{HM}, \cite{EH}, \cite{F4}. To avoid technicalities, let us assume that $g+1$ is composite and consider the Brill-Noether divisor $\cM_{g, d}^r$  of curves $[C]\in \cM_g$ with a $\mathfrak g^r_d$, where the Brill-Noether number $\rho(g, r, d):=g-(r+1)(g-d+r)=-1$. The class of the closure $\mm_{g, d}^r$ of $\cM_{g, d}^r$ in $\mm_g$ has been computed in \cite{EH} (and in \cite{HM} for $r=1$) and plays a crucial role in the proof by Harris, Mumford, Eisenbud that the moduli space $\mm_g$ is of general type for $g\geq 24$. There exists an explicit constant $c_{g, d, r}>0$ such that the following relation holds \cite{EH},
$$[\mm_{g, d}^r]= c_{g, d, r}\Bigl((g+3)\lambda-\frac{g+1}{6}\delta_0-\sum_{i=1}^{[\frac{g}{2}]} i(g-i)\delta_i\Bigr)\in \mathrm{Pic}(\mm_g).$$
By interpolation, one find a  constant $c'_{g, d, r}>0$ such that the effective linear combination
$$\frac{2}{g-2}[\zz_g]+c'_{g, d, r}[\pi^*(\mm_{g, d}^r)]=\frac{11g+37}{g+1}\lambda-2\alpha_0-3\beta_0-\sum_{i=1}^{[g/2]} (a_i\cdot \alpha_i+ b_i),$$
where $a_i, b_i\geq 2$ for $i\neq 1$ and $a_1, b_1>3$ are explicitly known rational constants. By comparison, whenever the inequality
$$\frac{11g+37}{g+1}<13\Leftrightarrow g>12$$
is satisfied (and $g+1$ is composite), the class $K_{\ss_g^-}$ is big, that is, $\ss_g^-$ is of general type. The case $g=12$ is rather difficult and we refer to the last section of \cite{FV1}.
\vskip 3pt

\subsection{The locus of curves with a vanishing theta-null.}
On $\ss_g^+$ we consider the locus of even spin curves with a vanishing theta characteristic. The following comes from \cite{F2}:
\begin{theorem}\label{thetanull}
The closure in $\ss_g^{+}$ of the divisor
$$\Theta_{\mathrm{null}}:=\bigl\{[C, \eta]\in \cS_g^{+}: H^0(C, \eta)\neq
0\bigr\}$$ of curves with an effective even theta characteristics has class equal to
$$[\overline{\Theta}_{\mathrm{null}}]=
\frac{1}{4}\lambda-\frac{1}{16}\alpha_0-\frac{1}{2}\sum_{i=1}^{[\frac{g}{2}]}
 \beta_i\in \   \mathrm{Pic}(\ss_g^{+}).$$
\end{theorem}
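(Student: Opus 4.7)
My plan is to realise $\thet$ as the (non-reduced) degeneracy locus of a morphism between vector bundles on the universal spin curve, compute its class via Grothendieck--Riemann--Roch on the universal quasi-stable curve, and then correct for the boundary divisors that are contained in the degeneracy locus for parity reasons. Boundary coefficients that do not emerge cleanly from the GRR computation are pinned down by a short set of test-curve intersection computations.

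Let $f\colon\cC\to\ss_g^+$ denote the universal quasi-stable curve, $\L\in\mathrm{Pic}(\cC)$ the universal spin sheaf of relative degree $g-1$, and $E\subset\cC$ the union of all exceptional rational components of fibres; Cornalba's construction yields $2\,c_1(\L)=c_1(\omega_f)-[E]$, with $E_j^2=-2$ for each component $E_j$. Choose an $f$-relative effective divisor $\Gamma$ on $\cC$ of sufficiently large degree so that $R^1 f_*(\L(\Gamma))=0$, and push forward $0\to\L\to\L(\Gamma)\to\L(\Gamma)|_\Gamma\to 0$ to obtain a morphism of vector bundles of equal rank
$$\varphi\colon f_*(\L(\Gamma))\longrightarrow f_*(\L(\Gamma)|_\Gamma)$$
whose set-theoretic degeneracy locus is $\{[C,\eta]:h^0(\eta)>0\}$. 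Since Mumford's theorem forces $h^0$ to be even and the Serre duality $R^1 f_*\L\cong(f_*\L)^\vee$ equips $\varphi$ with a self-dual symmetric structure, at a general point of $\thet$ the rank of $\varphi$ drops by two; consequently $\{\det\varphi=0\}$ is non-reduced of multiplicity two along $\thet$.

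Setting $K:=c_1(\omega_f)$ and applying Grothendieck--Riemann--Roch to $f$ with coefficients in $\L$, the degree-two component of $\mathrm{ch}(\L)\cdot\mathrm{td}(\omega_f^\vee)$ simplifies (using $2\,c_1(\L)=K-[E]$ and the vanishing of $K\cdot[E_j]$, since $\omega_f|_{E_j}\cong\OO_{E_j}$) to $-\tfrac{1}{24}K^2+\tfrac{1}{8}[E]^2$, whence
$$c_1(f_!\L)=-\tfrac{1}{24}\kappa_1+\tfrac{1}{8}f_*([E]^2).$$
Mumford's relation pulled back from $\mm_g$ gives $\kappa_1=12\lambda-\alpha_0-2\beta_0-\sum_{i\geq 1}(\alpha_i+\beta_i)$, while the $(-2)$-self-intersection of the exceptional components gives $f_*([E]^2)=-2(\beta_0+\sum_{i\geq 1}(\alpha_i+\beta_i))$. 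To convert $-\tfrac{1}{2}c_1(f_!\L)$ into $[\thet]$ one must subtract the contribution of those boundary divisors whose general point already lies in $\{h^0(\eta)\geq 2\}$; a Mayer--Vietoris computation on a general quasi-stable fibre shows that only the $\beta_i$ for $i\geq 1$ have this property, because there both components of the spin curve carry odd theta characteristics each contributing a global section. One then reads off the desired coefficients.

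The main obstacle is the careful bookkeeping of these boundary contributions, in particular the extraction of the coefficient $-\tfrac{1}{16}$ of $\alpha_0$ (which is sensitive to the ramification of $\pi\colon\ss_g^+\to\mm_g$ along $\beta_0$ and to the scheme structure of $\mathrm{Th}(C)$ on an irreducible nodal curve described in Proposition~\ref{components}) and the determination of the precise multiplicity of each $\beta_i$ in the determinantal scheme. Complementary test-curve verifications pin down the remaining numerology: a pencil on a general $K3$ surface confirms the $\lambda$-coefficient; transversal families across $\alpha_0$ and $\beta_0$ disentangle their respective contributions; and a family across $\alpha_i,\beta_i$ for $i\geq 1$ confirms that only $\beta_i$ contributes to $[\thet]$.
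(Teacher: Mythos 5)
You are proposing this for a statement that the survey itself does not prove (it is quoted from \cite{F2}), so I can only judge the plan on its own terms: the architecture --- determinantal realization via $\varphi$, GRR on the universal quasi-stable spin curve, removal of the boundary divisors contained in the degeneracy locus, division by the multiplicity along $\thet$ --- is of the right kind and close in spirit to the computations in \cite{F2}, \cite{FV1}. But two steps, as written, are genuinely gapped. First, the multiplicity statement. Since $f_*\L$ is torsion-free and generically zero, it vanishes identically, so the asserted duality $R^1f_*\L\cong (f_*\L)^{\vee}$ is empty (it would force $R^1f_*\L=0$), and $\varphi$, an evaluation map to an auxiliary divisor $\Gamma$, carries no self-dual or symmetric structure whatsoever: its source and target are unrelated bundles. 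Corank two at the generic point of $\thet$ only gives $\mathrm{ord}_{\thet}\det\varphi\geq 2$; the equality, i.e.\ that the length of $R^1f_*\L$ at the generic point of $\thet$ is exactly $2$, is a real input which must come from the symmetric/Lagrangian local structure of Mumford and Harris \cite{Mu}, \cite{H} (or a tangent-space computation), and the analogous lengths along the $\beta_i$ must likewise be computed, not absorbed into ``bookkeeping''.

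Second, the numerical inputs are incorrect where they matter. Your GRR formula $c_1(f_!\L)=-\tfrac{1}{24}\kappa_1+\tfrac18 f_*([E]^2)$ drops the nodal term of the Todd class; the correct statement is $c_1(f_!\L)=\lambda-\tfrac18 f_*\bigl(c_1(\omega_f)^2\bigr)+\tfrac18 f_*([E]^2)$, and with your version even the $\alpha_0$-coefficient would come out as $-\tfrac{1}{48}$ instead of $-\tfrac{1}{16}$. More seriously, the relations $[E_j]^2=-2$, $f_*([E]^2)=-2\bigl(\beta_0+\sum_{i\geq1}(\alpha_i+\beta_i)\bigr)$ and $f_*\bigl(c_1(\omega_f)^2\bigr)=\pi^*\kappa_1$ are correct only over $\beta_0$, where $\pi$ is simply ramified and the total space acquires a resolved $A_1$-point; over $\alpha_i,\beta_i$ with $i\geq 1$ the map $\pi$ is unramified, a universal spin bundle with $2c_1(\L)=c_1(\omega_f)-[E]$ exists only after accounting for the order-two inessential automorphisms of the exceptional components, and the naive chart gives $(-1)$-curves and factors of two. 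Indeed, feeding your stated boundary formulas into the corrected GRR yields $c_1(R^1f_*\L)=\tfrac12\lambda-\tfrac18\alpha_0+\tfrac18\sum_{i\geq1}(\alpha_i+\beta_i)$, which cannot equal $2[\thet]$ plus integral multiples of the $\beta_i$: it has a nonzero $\alpha_i$-term although $R^1f_*\L$ is not supported over a general point of $A_i$ (there $h^0(X,\theta)=0$), and it would force multiplicity $\tfrac98$ along $\beta_i$. So the compact-type bookkeeping is the crux, not a detail to defer. Finally, the proposed test-curve rescue is circular as described: to intersect a $K3$-pencil or a boundary family with $\thet$ one must know with what multiplicity the determinantal scheme counts each effective even theta characteristic, which is precisely the local analysis being postponed.
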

\noindent In the paper \cite{FV2} it is shown that the class $[\thet]\in \mbox{Eff}(\ss_g^+)$ is extremal when $g\leq 9$. It is an open question whether this is the case for arbitrary $g$, certainly there is no known counterexample to this possibility. Combining $[\thet]$ with pull-backs of effective classes from $\mm_g$ like in the previous case, we find a  constant $c^{''}_{g, d, r}>0$  such that
$$8[\overline{\Theta}_{\mathrm{null}}]+c^{''}_{g, d, r}[\pi^*(\mm_{g, d}^r)]=\frac{11g+29}{g+1}\lambda-2\alpha_0-3\beta_0-\sum_{i=1}^{[\frac{g}{2}]}
(a_i'\cdot \alpha_i+b_i'\cdot \beta_i)\in \mathrm{Eff}(\ss_g^+),$$
where the appearing coefficients satisfy the inequalities $a_i', b_i'\geq 2$ for $i\geq 2$ and $a_1', b_1'>3$. Restricting ourselves again to the case when $g+1$ is composite (while referring to \cite{F2} for the remaining cases), we obtain that whenever
$$\frac{11g+29}{g+1}<13\Leftrightarrow g>8,$$ the space $\ss_g^+$ has maximal Kodaira dimension. We summarize these facts as follows:
\begin{theorem}\label{sg+}
The moduli space $\ss_g^+$ is a variety of general type for $g>8$.
\end{theorem}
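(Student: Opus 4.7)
The strategy is to apply Ludwig's extension theorem cited above: it suffices to prove that $K_{\ss_g^+}$ lies in the big cone of $\ss_g^+$, i.e., can be written as a sum of a big class and an effective one. Bigness will then force $\ss_g^+$ to have maximal Kodaira dimension.

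First I would assemble the two effective classes already produced in the paper: $[\thet] \in \mathrm{Eff}(\ss_g^+)$ from Theorem \ref{thetanull}, with its very small boundary coefficients, and the pull-back $\pi^*[\mm_{g,d}^r]$ of the Harris--Mumford/Eisenbud--Harris Brill-Noether class from $\mm_g$. Taking the explicit linear combination
$$E := 8\,[\thet] + c''_{g,d,r}\,\pi^*[\mm_{g,d}^r] \in \mathrm{Eff}(\ss_g^+)$$
displayed in the text above, one obtains an effective class whose $\lambda$-coefficient is $\frac{11g+29}{g+1}$, whose $\alpha_0$- and $\beta_0$-coefficients are $-2$ and $-3$ respectively, and whose remaining boundary coefficients satisfy $a'_i, b'_i \geq 2$ for $i \geq 2$ and $a'_1, b'_1 > 3$. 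This matching of the $\alpha_0$- and $\beta_0$-coefficients between $E$ and $K_{\ss_g^+}$ is crucial, since these were the hardest coefficients to control by an effective class.

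The next step is a direct coefficient-by-coefficient subtraction against the canonical class formula derived earlier in the paper. One gets
$$K_{\ss_g^+} - E = \frac{2(g-8)}{g+1}\,\lambda + (a'_1-3)\alpha_1 + (b'_1-3)\beta_1 + \sum_{i \geq 2}\bigl((a'_i-2)\alpha_i + (b'_i-2)\beta_i\bigr),$$
in which every boundary coefficient is non-negative and the $\lambda$-coefficient is strictly positive precisely when $g > 8$. Since $\lambda$ is big and nef on $\mm_g$ (it is the Hodge class, semi-ample via the period map to $\cA_g$), and since $\pi:\ss_g^+ \to \mm_g$ is finite, the pull-back $\pi^*\lambda$ is big and nef on $\ss_g^+$. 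Hence $K_{\ss_g^+} - E$ is a positive multiple of a big class plus an effective class, hence big, and the decomposition $K_{\ss_g^+} = E + (K_{\ss_g^+} - E)$ realizes the canonical class as the sum of an effective and a big class, and so it is itself big.

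The main obstacle is that the Brill-Noether divisor $\mm_{g,d}^r$ used above exists only when the equation $(r+1)(g-d+r) = g+1$ admits a solution with $r \geq 1$ and $d \leq g$, that is, when $g+1$ is composite. For those $g > 8$ with $g+1$ prime, the plan is to replace $\pi^*[\mm_{g,d}^r]$ by the pull-back of a different effective divisor on $\mm_g$ with comparable slope -- for instance one of the Koszul-type divisors produced in \cite{F4} -- and verify that the analogous inequality $\frac{11g+29}{g+1} < 13$ together with non-negative boundary contributions still goes through. This technical bookkeeping, treated in detail in \cite{F2}, is the genuine work behind the sharp threshold $g > 8$.
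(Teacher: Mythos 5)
Your proposal is correct and follows essentially the same route as the paper: Ludwig's extension theorem to reduce to bigness of $K_{\ss_g^+}$, the effective combination $8[\thet]+c''_{g,d,r}\,\pi^*[\mm_{g,d}^r]$ from Theorem \ref{thetanull} and the Eisenbud--Harris class, the coefficient comparison giving the $\lambda$-excess $\frac{2(g-8)}{g+1}$ and hence the threshold $g>8$, and the deferral of the case $g+1$ prime to \cite{F2} (where Koszul-type divisors from \cite{F4} indeed substitute for the Brill--Noether class). Nothing essential differs from the paper's argument.
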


\section{Unirational parametrizations of $\cS_g$ in small genus}
Next we present ways of proving the unirationality of $\cS_g$ in small genus. The basic references are the papers \cite{FV1} and \cite{FV2}.
We recall that a normal $\mathbb Q$-factorial projective variety $X$ is said to be \emph{uniruled} if through a very general point $x\in X$ there passes a rational curve $R\subset X$. Uniruled varieties have negative Kodaira dimension. Conversely, if the canonical class $K_X$ is not a limit of effective divisor classes (which implies that the Kodaira dimension of $X$ is negative), then $X$ is uniruled, see \cite{BDPP}.

The classification by Kodaira dimension of both $\ss_g^-$ and $\ss_g^+$  is governed by $K3$ surfaces, in the sense that
$\cS_g$ is uniruled precisely when a general spin curve $[C, \theta]\in \cS_g$ can be represented as a section of a special $K3$ surface $S$.  By varying $C$ in a pencil on $S$, we induce a rational curve in the moduli space $\ss_g$ passing through a general point. The $K3$ surface must have special properties that will allow us to assign a theta characteristic to each curve in the pencil. In the case of even spin curves, the $K3$ surface in question must be of Nikulin type. We refer to \cite{vGS} for an introduction to Nikulin surfaces.
\begin{definition}
A polarized \emph{Nikulin surface} of genus $g\geq 2$ consists of a triple  $(S, e, \OO_S(C))$, where $S$ is a smooth $K3$ surface, $e\in \mbox{Pic}(S)$ is a non-trivial line bundle with the property that $e^{\otimes 2}=\OO_S(N_1+\cdots+N_8)$, where $N_1, \ldots, N_8$ are pairwise disjoint $(-2)$ curves on $S$, and $C\subset S$ is a numerically effective curve class such that $C^2=2g-2$ and $C\cdot N_i=0$, for $i=1, \ldots, 8$.
\end{definition}
Since the line bundle $\OO_S(N_1+\ldots+N_8)$ is divisible by two, there exists a double cover $f:\widetilde{S}\rightarrow S$ branched exactly along the smooth rational curves $N_1, \ldots, N_8$. The curve $C\subset S$ does not meet the branch locus of $f$, hence the restriction $f_{| f^{-1}(C)}: f^{-1}(C)\rightarrow C$ is an unramified double covering which induces a non-trivial half-period on $C$. Each curve in the linear system $|\OO_S(C)|$ acquires a half-period in its Jacobian in this way. The following result is quoted from \cite{FV2}:
\begin{theorem}\label{eventheta}
The even spin moduli space $\ss^+_g$ is uniruled for $g \leq 7$.
\end{theorem}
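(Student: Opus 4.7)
The plan is to realize a general even spin curve of genus $g \leq 7$ as a hyperplane section of a polarized Nikulin $K3$ surface, and then to exploit the pencils on such surfaces to produce rational curves through a general point of $\ss_g^+$.

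First I would set up the moduli space $\F_g^N$ of polarized Nikulin surfaces of genus $g$: triples $(S, e, L)$ with $(S,e)$ a Nikulin surface satisfying $e^{\otimes 2} = \OO_S(N_1 + \cdots + N_8)$ and $L$ a primitive polarization with $L^2 = 2g-2$ and $L \cdot N_i = 0$. The Nikulin lattice together with $L$ has rank $9$ in $\mbox{Pic}(S)$, whence $\dim \F_g^N = 11$. Over $\F_g^N$ I would form the $\mathbb{P}^g$-bundle $\mathcal{P}_g$ whose fibre over $(S, e, L)$ is the open locus of smooth $C \in |L|$; by adjunction $K_C = L|_C$, so $C$ has genus $g$, and $\eta_C := e|_C \in J_2(C)$ is a non-trivial half-period. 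The crucial geometric input, carried out in \cite{FV2}, is that the Nikulin data canonically enhances $\eta_C$ to an \emph{even} theta characteristic $\theta_C$ on $C$, algebraically in the parameters; explicitly, one passes to the $K3$ surface $\widetilde{S} \to S$ obtained by minimally resolving the double cover branched along $N_1 + \cdots + N_8$, pulls $C$ back to the \'etale double cover $\widetilde{C} \to C$ corresponding to $\eta_C$, and extracts $\theta_C$ from a natural line bundle on $\widetilde{S}$. This assignment defines a rational map
$$\Phi_g : \mathcal{P}_g \dashrightarrow \ss_g^+, \qquad (S, e, L, C) \longmapsto [C, \theta_C].$$

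The heart of the argument is to prove dominance of $\Phi_g$. Since $\dim \mathcal{P}_g = 11 + g$ while $\dim \ss_g^+ = 3g - 3$, the inequality $11 + g \geq 3g - 3$ --- equivalently $g \leq 7$ --- is the source of the numerical bound in the theorem. To upgrade dimension-compatibility to genuine dominance I would, for each $g \in \{2, \ldots, 7\}$, produce an explicit triple $(S_0, e_0, L_0, C_0)$ at which the differential $d\Phi_g$ is surjective, verified by a tangent-space computation separating deformations of the Nikulin structure from deformations of $C_0$ inside $|L_0|$. The main obstacle lies in the borderline case $g = 7$, where the expected fibre of $\Phi_g$ is zero-dimensional and no dimension slack is available; here one typically constructs $S_0$ with additional symmetry (for instance arising as a quotient of a $K3$ by a symplectic involution, after Garbagnati--Sarti) or invokes Mukai-style vector bundle techniques to ensure the existence of a Nikulin structure compatible with a general even spin curve.

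Granting dominance of $\Phi_g$, uniruledness follows from the $\mathbb{P}^g$-bundle structure of $\mathcal{P}_g \to \F_g^N$. Given a general point $[C, \theta] = \Phi_g(S, e, L, C) \in \ss_g^+$ and any line $\ell \cong \mathbb{P}^1 \subset |L|$ through $[C]$, the family $\{C_t\}_{t \in \ell}$ inherits the theta characteristics $\theta_{C_t}$ coming from the fixed Nikulin structure, and the resulting morphism $\ell \to \ss_g^+$, $t \mapsto [C_t, \theta_{C_t}]$, is non-constant because distinct members of a general pencil in $|L|$ have distinct moduli. This exhibits a rational curve in $\ss_g^+$ through a general point, establishing that $\ss_g^+$ is uniruled for $g \leq 7$.
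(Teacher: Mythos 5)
There is a genuine gap at the heart of your construction: the map $\Phi_g$ is not well defined as you describe it. A polarized Nikulin surface $(S,e,L)$ endows each smooth curve $C\in |L|$ only with the half-period $\eta_C=e\otimes\OO_C\in J_2(C)$; it does \emph{not} canonically single out a theta characteristic on $C$. Your claim that one "extracts $\theta_C$ from a natural line bundle on $\widetilde S$, algebraically in the parameters" is precisely the missing ingredient, and no such canonical enhancement is available (a theta characteristic on $C$ is a square root of $K_C=\OO_S(C)\otimes\OO_C$, and there is no line bundle on $S$ or on its double cover whose restriction provides one uniformly over $|L|$; the set of even theta characteristics forms a nontrivial finite cover, so a consistent algebraic choice over the whole $\mathbb{P}^g$-bundle cannot simply be asserted). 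Consequently your final step also collapses: the curves $C_t$ in a pencil $\ell\subset |L|$ inherit only half-periods from the Nikulin structure, not the theta characteristics $\theta_{C_t}$ you need to map $\ell$ to $\ss_g^+$.

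The paper's proof supplies exactly the device you are missing, and it changes where the work is done. One starts from a general $[C,\theta]\in\cS_g^+$, chooses $\eta\in J_2(C)$ with $h^0(C,\theta\otimes\eta)=1$, so that $\theta\otimes\eta=\OO_C(p_1+\cdots+p_{g-1})$ is an odd theta characteristic with reduced support, and then invokes the result of \cite{FV2} that the general \emph{Prym} curve $[C,\eta]\in\cR_g$ lies on a Nikulin surface for $g\leq 7$ (this dominance onto $\cR_g$, not onto $\ss_g^+$, is the actual source of the bound $g\leq 7$; your dimension count $11+g\geq 3g-3$ coincides numerically only because $\dim\cR_g=\dim\cS_g^+$). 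The rational curve through $[C,\theta]$ is then the pencil of hyperplane sections of $S\subset\PP^g$ through the fixed points $p_1,\ldots,p_{g-1}$: the divisor $p_1+\cdots+p_{g-1}$ is an odd theta characteristic on every member $C'$ of this pencil, and $e\otimes\OO_{C'}(p_1+\cdots+p_{g-1})$ is even by deformation invariance of the Arf invariant, recovering $\theta$ at $C'=C$. So it is the marked points (the support of the auxiliary odd theta characteristic), not the Nikulin bundle alone, that transport the spin structure along the pencil; your proposal needs this mechanism, or a proved substitute for it, together with a justification of the parity statement, before the uniruledness conclusion follows. Your treatment of the borderline cases ($g=7$, and the special case $g=6$ which the paper explicitly sets aside) is likewise only a plan, but the well-definedness issue above is the decisive obstruction.
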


\begin{proof}
Let us choose a general spin curve $[C, \theta]\in \cS_g^+$ and a non-trivial point of order two $\eta\in \mathrm{Pic}^0(C)[2]$, such that $h^0(C, \theta\otimes \eta)\geq 1$. Because of the generality assumption it follows that $h^0(C, \theta\otimes \eta)=1$ and the support of $\theta\otimes \eta$ consists of $g-1$ distinct points $p_1, \ldots, p_{g-1}$. To simplify matters assume that $g\neq 6$. Then it is proved in \cite{FV2} that the general Prym curve $[C, \eta]\in \cR_g$ is a section of a genus $g$ polarized Nikulin surface $(S, e)$, that is, $C\subset S$ and $\eta=e\otimes \OO_C$. We consider the map induced by the linear system
 $$
 \varphi_{|\OO_S(C)|}: S\rightarrow \PP^g.
 $$
The points $\varphi(p_1), \ldots, \varphi(p_{g-1})$ span a codimension $2$ linear subspace. Let
$\PP\subset |\OO_S(C)|$
be the pencil of curves on $S$ induced by the hyperplanes in $\PP^g$ through $\varphi(p_1), \ldots, \varphi(p_{g-1})$. Each curve $C' \in \PP$ contains the divisor $p_1+\cdots+p_{g-1}$ as an \emph{odd} theta characteristic. The line bundle $\OO_{C'}(p_1+\cdots+p_{g-1})\otimes e_{C'}\in \mathrm{Pic}^{g-1}(C')$ is an \emph{even} theta characteristic on each curve $C'$, because as already discussed, the Arf invariant remains constant in a family of spin curves. This procedure induces a rational curve in moduli
$$
m: \PP \to \ss^+_g, \ \ \mbox{  }  m(C'):=[C', e\otimes \OO_{C'}(p_1+\cdots+p_{g-1})],
$$
which passes through the general point $[C, \theta]\in \ss_g^+$ and finishes the proof.
\end{proof}

Observe that in this proof, if instead of being a Nikulin surface, $S$ is an arbitrary $K3$ surface containing $C$, the same reasoning can be used to construct a rational curve in $\ss_g^-$ that passes through a general point, provided the curve $C$ we started with,  has general moduli.  A general curve of genus $g$ lies on a $K3$ surface if an only if $g\leq 9$ or $g=11$, see \cite{M1}. The case $g=10$ can be handled via a slightly different idea, see \cite{FV1} Theorem 3.10. Thus one also has the following result:
\begin{theorem}
The odd spin moduli space $\ss_g^-$ is uniruled for $g\leq 11$.
\end{theorem}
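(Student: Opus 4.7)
The proof should closely parallel that of Theorem~\ref{eventheta}, dropping the Nikulin hypothesis and replacing $S$ by an arbitrary $K3$ surface. Fix a general odd spin curve $[C,\theta]\in \cS_g^-$; by genericity $h^0(C,\theta)=1$, so $\theta=\OO_C(D)$ for a reduced divisor $D=p_1+\cdots+p_{g-1}$. By Mukai's theorem \cite{M1}, for $g\leq 9$ and $g=11$ a general curve of genus $g$ lies on a polarized $K3$ surface $(S,\OO_S(C))$. The linear system $|\OO_S(C)|$ embeds $S\hookrightarrow \PP^g$, with $C$ lying in a distinguished hyperplane $H_C\cong \PP^{g-1}$ as its canonical model. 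Let $\Lambda:=\langle p_1,\ldots,p_{g-1}\rangle\subset \PP^g$, and let $\PP$ be the pencil of hyperplanes $H\supset \Lambda$, cutting out on $S$ the pencil of curves $C_H:=H\cap S\in |\OO_S(C)|$, each passing through all the $p_i$.

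The crux of the argument is to check that $\OO_{C_H}(D)$ is an odd theta characteristic on every $C_H\in \PP$. On $C$ itself, the relation $2D\sim K_C$ together with $h^0(\theta)=1$ forces the unique hyperplane of $H_C$ cutting $C$ in $2D$ to contain every $p_i$; since the $p_i$ are in general linear position in $H_C$, that hyperplane must be $\Lambda\subset H_C$. Thus $\Lambda\cap C=2D$ as a divisor of length $2g-2$; by B\'ezout, $\Lambda\cap S$ itself has length $\deg S=2g-2$, so the two coincide as zero-dimensional subschemes of $S$, namely the fat point $2p_1+\cdots+2p_{g-1}$. For any other $C_H\in \PP$, the hyperplane section $\Lambda\cap C_H$ has length $2g-2$ and lies inside $\Lambda\cap S$, forcing $\Lambda\cap C_H=2D$ as divisors on $C_H$. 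Being cut out by the hyperplane $\Lambda\subset H_{C_H}$, this divisor is canonical, so $2D\sim K_{C_H}$ and $\OO_{C_H}(D)\in \mathrm{Th}(C_H)$. The Mumford--Atiyah deformation invariance of the parity, together with the connectedness of $\PP$, then guarantees that $\OO_{C_H}(D)$ is odd.

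The assignment $C_H\mapsto [C_H,\OO_{C_H}(D)]$ now defines a rational map $\PP\dashrightarrow \ss_g^-$ whose image is a non-constant rational curve through the general point $[C,\theta]$, establishing uniruledness for $g\leq 9$ and $g=11$. The remaining case $g=10$ is outside Mukai's range, since a general genus-$10$ curve admits no $K3$ embedding; it is handled by the separate degeneration argument of \cite{FV1}, Theorem~3.10, which produces a covering family of rational curves on $\ss_{10}^-$ by a different construction. The main technical obstacle in the plan above is the scheme-theoretic verification that $\Lambda\cap S$ coincides with the fat divisor $2p_1+\cdots+2p_{g-1}$, which rests on the genericity assumptions ensuring that the $p_i$ span $\Lambda$ and that the intersection is proper at each $p_i$; the case $g=10$ requires ideas genuinely outside the Mukai $K3$ picture.
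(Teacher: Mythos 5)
Your proposal is correct and is essentially the paper's own argument: the paper obtains this theorem directly from the proof of Theorem~\ref{eventheta} by replacing the Nikulin surface with an arbitrary $K3$ surface (invoking Mukai's result \cite{M1} that a general curve of genus $g\leq 9$ or $g=11$ lies on a $K3$) and deferring $g=10$ to \cite{FV1}, Theorem~3.10, exactly as you do. Your scheme-theoretic check that $\Lambda\cap S=2p_1+\cdots+2p_{g-1}$, so that every member of the pencil carries $\OO(D)$ as a theta characteristic of constant (odd) parity, simply makes explicit the ``same reasoning'' the paper leaves implicit.
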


\subsection{Odd theta characteristics and Mukai models of $\cS_g^-$.} We explain the strategy pursued in \cite{FV1} to construct
alternative models of moduli spaces of odd spin curves which can then be used to establish unirationality of the moduli space:

\begin{theorem}\label{sgunir}
$\ss_g^-$ is unirational for $g\leq 8$.
\end{theorem}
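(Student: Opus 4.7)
The plan is, for each $3 \leq g \leq 8$, to produce a rational dominant parametrization of $\cS_g^-$ by combining Mukai's description of general canonical curves as linear sections of homogeneous varieties with the geometric interpretation of odd theta characteristics as everywhere-tangent hyperplanes. For a general $[C, \theta] \in \cS_g^-$ one has $h^0(C, \theta) = 1$, so $\theta = \OO_C(D)$ for a unique reduced effective divisor $D = p_1 + \cdots + p_{g-1} \in C_{g-1}$, and the condition $2D \in |K_C|$ means that under the canonical embedding $C \subset \PP^{g-1}$ there is a uniquely determined hyperplane $H$ with $H \cdot C = 2D$. Thus $\cS_g^-$ is birational to the moduli space of pairs (canonical genus $g$ curve, everywhere-tangent hyperplane).

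For $g \in \{6,7,8\}$ I would use Mukai's presentations of a general canonical curve $C$ as a transverse linear section $C = V_g \cap L$, with $L \cong \PP^{g-1}$, of a homogeneous Mukai variety $V_g \subset \PP^{N_g}$: for $g=6$, the quadric section of $G(2,5) \subset \PP^9$ inside $\PP^{10}$; for $g=7$, the spinor tenfold $OG(5,10) \subset \PP^{15}$; for $g=8$, the Grassmannian $G(2,6) \subset \PP^{14}$. For $g \leq 5$ one uses classical complete-intersection descriptions (plane quartics with their bitangents for $g=3$, quadric--cubic complete intersections in $\PP^3$ for $g=4$, three quadrics in $\PP^4$ for $g=5$), where the arguments are considerably easier.

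The core construction is to form the incidence variety $\mathcal{I}_g$ of tuples $(L, H; p_1, \ldots, p_{g-1})$ with $L$ a $(g-1)$-plane in $\PP^{N_g}$, $H$ a hyperplane in $\PP^{N_g}$, and $p_1, \ldots, p_{g-1} \in V_g \cap L \cap H$ such that $H$ is tangent to the curve $C := V_g \cap L$ at each $p_i$ (i.e., the divisor $H \cdot C$ contains $2p_i$). Quotienting by $\mbox{Aut}(V_g)$, the induced map $\mathcal{I}_g \to \cS_g^-$ sending $(L, H; p_i) \mapsto [C, \OO_C(p_1 + \cdots + p_{g-1})]$ is the candidate dominant parametrization. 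One verifies that $\mathcal{I}_g$ is rational by realizing it as a tower of Grassmannians and projective/vector bundles cut out by explicit linear conditions, and establishes dominance through a tangent-space computation at a generic point matching dimensions $\dim \mathcal{I}_g = \dim \cS_g^- + \dim \mbox{Aut}(V_g)$. The main obstacle will be the dominance/rationality check in the top case $g = 8$: the $g-1 = 7$ tangency conditions on a hyperplane of $L \cong \PP^7$ are formally numerically independent but require a delicate analysis on $G(2,6)$ to show that the induced incidence locus remains rational; a similar but more elementary check is needed for $g=7$, and $g=6$ requires handling the extra quadric appearing in Mukai's model. Once rationality and dominance are established, the theorem follows.
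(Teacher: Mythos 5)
Your starting point --- for a general $[C,\theta]\in\cS_g^-$ the theta characteristic is a contact hyperplane of the canonical model, and for $6\le g\le 8$ the canonical curve is a linear section of a Mukai variety $V_g$ --- is the same as the paper's, and your remark that the complete-intersection cases $g\le 5$ are much easier is correct (there the tangency conditions are linear in the \emph{varying} defining hypersurfaces). The genuine gap is the central claim that your incidence variety $\mathcal{I}_g$ of triples (linear section, tangent hyperplane, contact points) is rational because it is ``a tower of Grassmannians and projective/vector bundles cut out by explicit linear conditions''. For $g=7,8$ the Mukai variety is a \emph{fixed} homogeneous variety: the only varying datum is the linear space $L$, and the tangency of $H$ to $C=V_g\cap L$ at $p_1,\dots,p_{g-1}$ couples $L$ and the points nonlinearly (it amounts to asking that the span $\langle p_1,\dots,p_{g-1}\rangle$ meet each embedded tangent space $\mathbb T_{p_i}(V_g)$), so there is no evident linear tower. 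Indeed, after forgetting the ambient hyperplane $H$ (a projective-space worth of choices), your $\mathcal{I}_g$ is exactly the paper's correspondence $\cU_g^-$, a finite cover of degree $2^{g-1}(2^g-1)$ of an open subset of a Grassmannian; finite covers of rational varieties are not unirational in general, and proving unirationality of this particular cover \emph{is} the theorem. Dominance is the easy part (it follows from Mukai's results); no tangent-space or dimension count supplies the rationality. A telling sanity check: your argument as stated would apply verbatim to $g=9$ with $V_9=SG(3,6)$, whereas the actual method stops at $g=8$ for a precise numerical reason.

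What the paper does instead is a degeneration argument that you are missing. One fixes a cluster $Z$ (the length $2g-2$ contact scheme) supported on a $\delta$-nodal curvilinear section $\Gamma$, and exploits the fact that the linear sections of $V_g$ containing the \emph{fixed} scheme $Z$ form a linear, hence rational, family: this makes $\mathcal{P}_{g,\delta}=\bigl\{(C,(\Gamma,Z)): Z\subset C\bigr\}$ a projective bundle over $\cU^-_{g,\delta}$, and after the $\mathrm{Aut}(V_g)$-quotient a projective bundle over the boundary stratum $B^-_{g,\delta}\subset \ss_g^-$. The projection $\mathcal{P}_{g,\delta}\rightarrow \cU_g^-$ is dominant if and only if $\delta\le n_g-1$, and taking $\delta=g-1$ this inequality holds precisely for $g\le 8$ (since $n_8=8$, while $n_9=6$ excludes $g=9$). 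Unirationality of $\ss_g^-$ then follows from the separately proved unirationality of $B^-_{g,g-1}$, i.e.\ of spin structures supported on irreducible $(g-1)$-nodal sections of geometric genus one (\cite{FV1}, Theorem 4.16). So the missing ideas are the passage to clusters on nodal curves, the projective-bundle structure over a boundary stratum of small geometric genus, the unirationality of that stratum, and the numerical dominance condition $\delta\le n_g-1$ which is what singles out the range $g\le 8$.
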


The main idea is to construct a dominant map over $\ss_g^-$ from the total space of a projective bundle over a space parametrizing spin curves on nodal curves of smaller genus. We begin by recalling that Mukai, in a series of well-known papers \cite{M1}, \cite{M2}, \cite{M3}, \cite{M4}, has found ways of representing a general canonical curve of genus $g\leq 9$ as a linear section of a certain $n_g$-dimensional rational homogeneous variety $V_g\subset \PP^{n_g+g-2}$, which we shall call the \emph{Mukai variety} of genus $g$. One has the following list:\bigskip  \par \noindent \it
- $V_9$:  the Pl\"ucker embedding of the symplectic Grassmannian $SG(3,6)\subset \PP^{13}$, \par \noindent
- $V_8$:  the Pl\"ucker embedding of the Grassmannian $G(2,6)\subset \PP^{14}$, \par \noindent
- $V_7$:  the Pl\"ucker embedding of the orthogonal Grassmannian $OG(5,10)\subset \PP^{15}$. \bigskip  \par \noindent   \rm
Inside the Hilbert scheme of curvilinear sections of $V_g$, we denote by $\cU_g$ the open subset
classifying nodal sections $C\subset V_g$ by a linear space of dimension $g-1$.
The automorphism group $\mbox{Aut}(V_g)$ acts on $\cU_g$ and we call the GIT quotient
$$\mathfrak{M}_g:=\cU_g\dblq \mbox{Aut}(V_g)$$
the \emph{Mukai model} of the moduli space of curves of genus $g$. Note that with our definition, the variety $\mathfrak{M}_g$ is only quasi-projective and the Picard number of $\mathfrak{M}_g$ is equal to $1$. The moduli map $\cU_g\rightarrow \mm_g$ being $\mbox{Aut}(V_g)$-invariant,
it induces a regular map $\phi_g:\mathfrak{M}_g\rightarrow \mm_g$. We can paraphrase Mukai's results as stating that the map
$\phi_g: \mathfrak{M}_g \rightarrow \mm_g$ is a birational isomorphism, or equivalently, the general $1$-dimensional linear section of $V_g$ is a curve with general moduli. The map $\phi_g$ deserves more study and in principle it can be used to answer various questions concerning the cohomology of $\mm_g$ or the minimal model program of the moduli space of curves (see \cite{Fed} for a case in point when $g=4$).
The following concept is key to our parametrization of $\ss_g^-$ using Mukai models.

\begin{definition} Let $\mathfrak Z_{g-1}$ be the space of \emph{clusters}, that is, $0$-dimensional schemes $Z\subset V_g$ of length $2g-2$ with the following properties:
\begin{itemize}
\item[(1)] $Z$ is a hyperplane section of a smooth curve section $[C] \in \cU_g$,
\item[(2)] $Z$ has multiplicity two at each point of its support,
\item[(3)] $\mathrm{supp}(Z)$ consists of $g-1$ linearly independent points.
\end{itemize} \par \noindent
\end{definition} \par \noindent
 A general point of $\mathfrak{Z}_{g-1}$ corresponds to a $0$-cycle $p_1+\cdots+p_{g-1}\in \mbox{Sym}^{g-1}(V_g)$ satisfying $$\mbox{dim } \langle p_1, \ldots, p_{g-1}\rangle\cap \mathbb T_{p_i}(V_g)\geq 1, \mbox{ for } \ i=1, \ldots, g-1.$$ Furthermore, $\mathfrak{Z}_{g-1}$ is birational  to the subvariety of the Grassmannian variety $G(g-1, n_g+g-1)$ parametrizing $(g-2)$-dimensional planes $\Lambda\subset \PP^{n_g+g-2}$ such that $\Lambda \cdot V_g=2p_1+\cdots+2p_{g-1}$, where $p_1, \ldots, p_{g-1}\in V_g$. Then we consider the incidence correspondence:

\begin{center}
\fbox{
$\cU^-_g := \bigl\{ (C,Z) \in \cU_g \times \mathfrak{Z}_{g-1}: Z \subset C \bigr\}.$
}
\end{center}

The first projection map $\cU^-_g \to \cU_g$
is finite of degree $2^{g-1}(2^g-1)$; its fibre at a general point $[C]\in \cU_g$ corresponding to a smooth curve classifies odd theta
characteristics of $C$.  The spin moduli map $\cU^-_g \dashrightarrow \overline {\mathcal S}_g^-$ induces a birational isomorphism
$$\phi_g^-:\cU_g^-\dblq \mathrm{Aut}(V_g)\rightarrow \ss_g^-.$$

Let us fix now an integer $0 \leq \delta\leq g-1$. We define the locally closed set of pairs consisting of clusters and $\delta$-nodal curvilinear sections of $V_g$, that is,
$$\cU_{g, \delta}^- := \bigl\{(\Gamma, Z)\in \cU_g^- : \mathrm{sing}(\Gamma)\subset \mbox{supp}(Z)\ \ \mathrm{ and } \ \ |\mathrm{sing}(\Gamma)| = \delta \bigr\}.
$$
The quotient of $\cU_{g, \delta}^-$ under the action of the automorphism group of $V_g$ is birational to the locus $B_{g, \delta}^-\subset \ss_g^-$ with general point given by an odd spin structure on a curve whose stable model is an irreducible $\delta$-nodal curve where each of the nodes is "blown-up" and an exceptional component is inserted. Let us fix a general point $(\Gamma, Z)\in \cU_{g, \delta}^-$ and suppose that $\mbox{Sing}(\Gamma)=\{p_1, \ldots, p_{\delta}\}$ and denote the $p_{\delta+1}, \ldots, p_{g-1}\in \Gamma_{\mathrm{reg}}$ the remaining points in the support of $Z$. If $\nu:N\rightarrow \Gamma$ is the normalization map, then we observe that $\OO_N(p_{\delta+1}+\cdots+p_{g-1})\in \mathrm{Th}(N)_-$, which gives rise to a point in the locus $B_{g, \delta}^-$.

\vskip 3pt
The important point now is that over $\cU_{g, \delta}^-$ one can consider an incidence correspondence that takes into account not only a $\delta$-nodal curve together with a cluster, but also all linear sections of $V_g$ that admit the same cluster. Precisely:
\begin{center}
\fbox{$\mathcal{P}_{g, \delta} := \bigl\{ \bigl(C, (\Gamma, Z)\bigr) \in \cU_g \times \cU^-_{g, \delta} \ : \ Z \subset C \bigr\}.$}
\end{center}

\vskip 3pt
The variety $\mathcal{P}_{g, \delta}$ comes equipped with projection maps
$$
\begin{CD}
{\cU^-_g} @<{\alpha}<< {\mathcal{P}_{g, \delta}} @>{\beta}>> {\cU^-_{g, \delta}}. \\
\end{CD}
$$
It is shown in \cite{FV1} that $\mathcal{P}_{g, \delta}$ is birational to a projective bundle over $\cU_{g, \delta}^-$ and furthermore,  the quotient $\PP_{g, \delta}^-:=\mathcal{P}_{g, \delta}\dblq \mbox{Aut}(V_g)$ is a projective bundle over $B_{g, \delta}^-$. Moreover, it is proved that the projection map
$\alpha$ is dominant if and only if
$$\delta\leq n_g-1.
$$
To summarize these considerations, we have reduced the unirationality of $\ss_g^-$ to two conditions. One is numerical and depends solely on the Mukai variety $V_g$, the other has to do with the geometry of the spin moduli space $B_{g, \delta}^-$ of nodal curves of smaller geometric genus:
\begin{theorem}
For $g\leq 9$, if $n_g$ denotes the dimension of the corresponding Mukai variety $V_g$, the moduli space $\ss_g^-$ is unirational provided there exists an integer $1\leq \delta\leq g-1$ such that
\begin{enumerate}
\item $\delta\leq n_g-1$, \
\item $B_{g, \delta}^-$  is unirational.
\end{enumerate}
\end{theorem}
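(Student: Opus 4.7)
The plan is to exploit the two-sided incidence correspondence $\mathcal{P}_{g,\delta}$ together with the structural facts about its projections $\alpha$ and $\beta$ that have already been laid out. The strategy is to transport unirationality from $B_{g,\delta}^-$ to $\ss_g^-$ along the diagram of projective bundles and GIT quotients, using hypothesis (1) to guarantee dominance of $\alpha$ and hypothesis (2) as the input.

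First I would pass to the $\mbox{Aut}(V_g)$-quotients and work with the induced diagram
\[
\ss_g^- \xleftarrow{\ \bar\alpha\ } \PP_{g,\delta}^- \xrightarrow{\ \bar\beta\ } B_{g,\delta}^-,
\]
obtained from $\cU_g^- \xleftarrow{\alpha} \mathcal{P}_{g,\delta} \xrightarrow{\beta} \cU_{g,\delta}^-$ after taking $\mbox{Aut}(V_g)$-GIT quotients and using the birational isomorphisms $\phi_g^-:\cU_g^-\dblq \mbox{Aut}(V_g)\to \ss_g^-$ and $\cU_{g,\delta}^-\dblq \mbox{Aut}(V_g)\dashrightarrow B_{g,\delta}^-$. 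The quoted result from \cite{FV1} asserts that $\bar\beta:\PP_{g,\delta}^-\to B_{g,\delta}^-$ is (birationally) a projective bundle; since unirationality is preserved by projective bundles, hypothesis (2) immediately gives that $\PP_{g,\delta}^-$ is unirational.

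Next I would invoke the other quoted fact: $\alpha$ (equivalently $\bar\alpha$) is dominant whenever $\delta\leq n_g-1$, which is exactly hypothesis (1). Combining the two, $\bar\alpha$ is a dominant rational map from a unirational variety $\PP_{g,\delta}^-$ onto $\ss_g^-$, and composing with a dominant map from a projective space to $\PP_{g,\delta}^-$ produces a dominant rational map from projective space to $\ss_g^-$. Hence $\ss_g^-$ is unirational, which is the desired conclusion.

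The only genuinely delicate point is the bookkeeping for the GIT quotient when assembling the diagram: one must check that taking $\mbox{Aut}(V_g)$-quotients commutes (birationally) with the projective bundle structure along $\beta$ and that the resulting map $\bar\alpha$ remains dominant, but both of these are precisely the structural results of \cite{FV1} cited just above the theorem statement. Apart from this, the argument is a formal chase through the diagram, and no further geometric input beyond the two numbered hypotheses is needed.
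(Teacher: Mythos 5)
Your proposal is correct and follows essentially the same route as the paper, which obtains the theorem exactly as a summary of the preceding discussion: the projective-bundle structure of $\PP_{g,\delta}^-$ over $B_{g,\delta}^-$ transports unirationality upward, and dominance of $\alpha$ for $\delta\leq n_g-1$ (both quoted from \cite{FV1}) pushes it onto $\ss_g^-$ via $\phi_g^-$. The GIT bookkeeping you flag is indeed the only subtle point, and it is handled in \cite{FV1} just as you assume.
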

It turns out that the locus $B_{g, g-1}^-$ is unirational for $g\leq 10$ (see \cite{FV1} Theorem 4.16). However condition (i) is only satisfied when $g\leq 8$, and this is the range for which Theorem \ref{sgunir} is known at the moment.

\section{Geometric aspects of moduli spaces of theta characteristics}

In this section we discuss a few major themes related to aspects of the geometry of $\cS_g$ other than birational classification.

\subsection{The Brill-Noether stratification of $\cS_g$.}
One can stratify theta characteristics by their number of global sections. For an integer $r\geq -1$ let us denote by
$$\cS_g^r:=\bigl\{[C, \theta]\in \cS_g: h^0(C, \theta)\ge r+1,\ \ h^0(C, \theta) \equiv r+1 \mbox{ mod }2\bigr\}.$$
The variety $\cS_g^r$ has a Lagrangian determinantal structure discussed in \cite{H} Theorem 1.10, from which it follows that each component
of $\cS_g^r$ has codimension at most ${r+1\choose 2}$ inside $\cS_g$. This bound also follows from  Nagaraj's \cite{Na} interpretation of the tangent space to the stack $\cS_g^r$ which we briefly explain. Fix a point $[C, \theta]\in \cS_g^r$ and form the Gaussian map
$$\psi_{\theta}:\wedge^2 H^0(C, \theta)\rightarrow H^0(C, K_C^{\otimes 2}), \mbox{ } \ s\wedge t\mapsto s\cdot dt-t\cdot ds.$$
More intrinsically, the projectivization of the map $\psi_{\theta}$ assigns to a pencil $\langle s, t\rangle \subset |\theta|$ its ramification divisor. Recalling the identification provided by Kodaira-Spencer theory
$$T_{[C, \theta]}(\cS_g)=H^0(C, K_C^{\otimes 2})^{\vee},$$
it is shown in \cite{Na} that the following isomorphism holds:
\begin{center}
\fbox{$T_{[C, \theta]}(\cS_g^r)=\bigl\{\varphi\in H^0(C, K_C^{\otimes 2})^{\vee}: \varphi_{| \mathrm{Im}\ \psi_{\theta}}=0\bigr\}.$}
\end{center}
This description is consistent with the bound $\mbox{codim}(\cS_g^r, \cS_g)\leq {r+1\choose 2}$ from \cite{H}.  We now ask what is the actual dimension of the strata $\cS_g^r$? Using hyperelliptic curves one can observe that $\cS_g^{[\frac{g-1}{2}]}\neq \emptyset$
even though the expected dimension of this locus as a determinantal variety is very negative. Moreover, the locus $\cS_{3r}^r$ is non-empty and consists of (theta characteristics on) curves $C\subset \PP^r$ which are extremal from the point of view of Castelnuovo's bound. Therefore one cannot hope that the dimension of $\cS_g^r$ be always $3g-3-{r+1\choose 2}$. However this should be the case, and the locus $\cS_g^r$ should enjoy certain regularity properties, when $r$ is relatively small with respect to $g$. We recall the following precise prediction from \cite{F1}:
\begin{conjecture}\label{strat}
For $r\geq 1$ and $g\geq {r+2\choose 2}$, there exists a component of the locus $\cS_g^r$ having codimension ${r+1\choose 2}$ inside $\cS_g$.
\end{conjecture}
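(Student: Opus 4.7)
The plan is to reduce the conjecture to producing a single smooth spin curve with injective Gaussian map, and then to invoke Nagaraj's tangent-space description. Since $\cS_g^r$ carries the Lagrangian determinantal structure of \cite{H}, every component has codimension at most ${r+1 \choose 2}$. On the other hand, Nagaraj's identification $T_{[C, \theta]}\cS_g^r = (\mathrm{Im}\,\psi_\theta)^{\perp} \subset H^0(C, K_C^{\otimes 2})^\vee$ shows that at a point $[C, \theta]$ with $h^0(C, \theta)=r+1$ exactly, the codimension of every irreducible component of $\cS_g^r$ passing through $[C,\theta]$ is at least $\mathrm{rank}\,\psi_\theta$, and therefore attains the maximum ${r+1 \choose 2}$ precisely when $\psi_\theta$ is injective. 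At such a point $\cS_g^r$ is moreover smooth of the expected dimension. Consequently it suffices to exhibit, for each admissible pair $(g, r)$, one spin curve $[C, \theta] \in \cS_g$ with $h^0(C, \theta) = r+1$ of the prescribed parity and with $\psi_\theta$ injective; openness of these conditions then produces a component of codimension ${r+1 \choose 2}$.

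Next I would realize the candidate $(C, \theta)$ as a smooth non-degenerate half-canonical curve $\varphi_\theta(C) \subset \PP^r$ of degree $g-1$. The threshold $g \geq {r+2 \choose 2}$ is precisely the regime where the Hilbert scheme of such half-canonical curves has non-negative expected dimension. Two natural strategies seem promising. The first is a liaison construction: obtain $C$ as the residual of a well-understood curve (a canonical curve of smaller genus, or a smooth section of a rational normal scroll) inside a complete intersection of quadrics, the half-canonical polarization being induced on $C$ by the residual pairing. The second is a surface-theoretic smoothing: produce $C$ by deforming a reducible half-canonical curve lying on an auxiliary surface, such as a Nikulin surface in the spirit of Theorem \ref{eventheta} when $r$ is odd, or a canonically polarized surface otherwise. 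The constructions from Theorem \ref{sgunir} via Mukai models cover the small genus range and suggest the general pattern.

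The third step is to verify injectivity of $\psi_\theta$ on a sufficiently general member of the constructed family. Intrinsically, a non-zero element of $\ker \psi_\theta$ encodes a pencil $\langle s, t \rangle \subset H^0(C, \theta)$ whose induced morphism $C \to \PP^1$ has ramification divisor pulled back from $|\theta|$, which is a highly non-generic condition. By upper semicontinuity in flat families, injectivity only improves under generalisation, so it suffices to check it on a carefully chosen degenerate model, where it becomes a linear algebra calculation involving the theta characteristics of the components.

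The main obstacle is the second step: the uniform existence of smooth half-canonical curves in $\PP^r$ for every admissible $g \geq {r+2 \choose 2}$ and every parity of $r+1$. For small genus this is available through Mukai models or the $K3$/Nikulin constructions underlying Theorems \ref{eventheta} and \ref{sgunir}, but a uniform construction appears to require new ideas, because the divisibility constraint $\theta^{\otimes 2} = K_C$ prevents standard Halphen--Gieseker smoothing arguments from applying directly; in particular, parity and integrality conditions can obstruct the naive smoothing of a reducible half-canonical curve. Once a single smooth half-canonical curve of the correct parity is produced for each admissible $(g, r)$, the Gaussian injectivity should follow from a general-position semicontinuity argument, and the conjecture would follow.
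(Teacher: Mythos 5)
The statement you are addressing is a \emph{conjecture}: the paper does not prove it, and it is open in general. What the paper records is exactly the reduction you carry out in your first paragraph — via Harris's determinantal bound and Nagaraj's identification of $T_{[C,\theta]}(\cS_g^r)$ with the annihilator of $\mathrm{Im}\,\psi_{\theta}$, it suffices to exhibit a single spin curve $[C,\theta]\in \cS_g^r$ with $h^0(C,\theta)=r+1$ and $\psi_{\theta}$ injective — together with the fact that this has been accomplished in \cite{F1} only for $1\leq r\leq 9$ and $r=11$. So your first and third paragraphs reproduce the known reduction, but the actual content of the conjecture is precisely your ``second step,'' and there you offer only candidate strategies (liaison, Nikulin or canonically polarized surfaces, Mukai models) without carrying any of them out; you even flag this yourself as the main obstacle. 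That is a genuine gap, not a technicality: producing, for every $r$ and every $g\geq {r+2\choose 2}$, a half-canonical curve of the right parity with injective Gaussian map is the open problem, and no amount of semicontinuity can substitute for the missing construction. Note also that your semicontinuity remark in step three is not free of charge — injectivity of $\psi_{\theta}$ is an open condition only along loci where $h^0(C,\theta)$ stays equal to $r+1$, so degenerating to a reducible or special model typically raises $h^0$ and changes the source of $\psi_{\theta}$, which is exactly why the verified cases in \cite{F1} require delicate explicit examples.

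One structural improvement you missed: the paper quotes (from \cite{F1}, Proposition 2.4) the inductive statement that if $\cS_{g-1}^r$ has a component of codimension ${r+1\choose 2}$, then so does $\cS_g^r$. This reduces Conjecture \ref{strat} to the single base case $g={r+2\choose 2}$ for each $r$, so your plan of constructing half-canonical curves in $\PP^r$ uniformly in $g$ is both harder than necessary and not how the known cases are handled; the genuine difficulty that remains, and that your proposal does not resolve, is the base-case construction for arbitrary $r$.
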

The conjecture is proved in \cite{F1} for all integers $1\leq r\leq 9$ and $r=11$. We point out that $\cS_g^1$ coincides with the divisor $\Theta_{\mathrm{null}}$ studied in \cite{T2} and \cite{F2}. To prove Conjecture \ref{strat} it suffices to exhibit a single spin curve $[C, \theta]\in \cS_g^r$ with an injective Gaussian map $\psi_{\theta}$. As further evidence, we mention the following result, see \cite{F1} Proposition 2.4:
\begin{theorem}
We fix $g, r\geq 1$. If $\cS_{g-1}^r$ has a component of codimension ${r+1\choose 2}$ inside $\cS_{g-1}$ then $\cS_g^r$ has a component of codimension ${r+1\choose 2}$ inside $\cS_g$.
\end{theorem}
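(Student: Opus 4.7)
The plan is to apply Nagaraj's tangent space description to reduce the claim to producing a single spin curve $[X,\theta]\in \overline{\cS}_g^r$ with injective Gaussian map $\psi_\theta:\wedge^2 H^0(X,\theta)\to H^0(X,\omega_X^{\otimes 2})$, since the rank of $\psi_\theta$ then forces the codimension of the component through $[X,\theta]$ to equal ${r+1\choose 2}$. Using the hypothesis together with Nagaraj's formula, I pick $[C',\theta']\in \cS_{g-1}^r$ on the codimension-${r+1\choose 2}$ component at which $\psi_{\theta'}$ is injective. I then choose a general point $y\in C'$, a smooth elliptic curve $E$, and a nontrivial $2$-torsion line bundle $\eta_E\in E[2]\setminus\{0\}$ regarded as an even theta characteristic (so $h^0(E,\eta_E)=0$), and form the quasi-stable curve $X := C'\cup_{y^+}\PP^1\cup_{y^-}E$ with exceptional rational bridge $\PP^1$. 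Equipping $X$ with $\theta := (\theta',\OO_{\PP^1}(1),\eta_E)\in \mathrm{Pic}^{g-1}(X)$ and the natural $\beta:\theta^{\otimes 2}\to\omega_X$ vanishing to order $2$ along $\PP^1$, one obtains a stable spin curve of genus $g$ lying above the boundary divisor $\Delta_1\subset \mm_g$, with $\mathrm{arf}(\theta)=\mathrm{arf}(\theta')\equiv r+1\pmod 2$.

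The key computations are then the count of sections and the injectivity of $\psi_\theta$. A section of $\theta$ on $X$ is a matched triple $(s_{C'},s_{\PP^1},s_E)$; since $h^0(E,\eta_E)=0$ the component $s_E$ vanishes, forcing $s_{\PP^1}(y^-)=0$ and thus pinning down $s_{\PP^1}\in H^0(\PP^1,\OO(1))$ as the unique section vanishing at $y^-$ and rescaled by $s_{\PP^1}(y^+)=s_{C'}(y^+)$. The restriction $s\mapsto s_{C'}$ is therefore an isomorphism $H^0(X,\theta)\cong H^0(C',\theta')$, giving $h^0(X,\theta)=r+1$. Because $s_E=t_E=0$ and $s_{\PP^1},t_{\PP^1}$ are proportional sections of $\OO_{\PP^1}(1)$, the Gaussian form $\psi_\theta(s\wedge t)=s\,dt-t\,ds$ vanishes identically on $\PP^1\cup E$, and its restriction to $C'$ equals $\psi_{\theta'}(s_{C'}\wedge t_{C'})\in H^0(C',K_{C'}^{\otimes 2})$ with no pole at $y^+$; injectivity of $\psi_{\theta'}$ therefore implies the injectivity of $\psi_\theta$.

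Applying Nagaraj's formula on the stack $\ss_g$ at $[X,\theta]$ forces the Zariski tangent space of $\overline{\cS}_g^r$ to have codimension ${r+1\choose 2}$, so the component $\mathcal Z\subset \overline{\cS}_g^r$ through $[X,\theta]$ has dimension exactly $3g-3-{r+1\choose 2}$. The main obstacle is checking $\mathcal Z\cap \cS_g\neq \emptyset$, since semicontinuity and parity together only force $h^0(C_t,\theta_t)\in\{r+1,r-1,r-3,\dots\}$ along a smoothing, so a priori $\mathcal Z$ could sit entirely inside $\pi^{-1}(\Delta_1)\subset \partial\ss_g$. This is resolved by a residue computation at the node: the tangent direction of $\ss_g$ transversal to $\Delta_1$ pairs with each $\varphi\in H^0(X,\omega_X^{\otimes 2})$ via $\mathrm{Res}_{y}\,\varphi$, and every Gaussian section $\psi_\theta(s\wedge t)$ lies in the subsheaf $K_{C'}^{\otimes 2}\subset K_{C'}^{\otimes 2}(2y^+)=(\omega_X|_{C'})^{\otimes 2}$ without pole, so this residue vanishes. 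Hence the smoothing direction belongs to $T_{[X,\theta]}\overline{\cS}_g^r$, the component $\mathcal Z$ genuinely meets the open stratum $\cS_g$, and $\cS_g^r$ acquires a component of codimension ${r+1\choose 2}$, completing the inductive step.
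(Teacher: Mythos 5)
The degeneration you construct (an elliptic tail $E$ with a non-trivial, hence non-effective, two-torsion bundle $\eta_E$, joined to $C'$ by an exceptional bridge, so that $h^0(X,\theta)=h^0(C',\theta')$) is the natural one here; the survey itself gives no proof of this statement, quoting it from \cite{F1}, Proposition 2.4, so your argument has to stand on its own. The decisive gap is in your very first step: from the hypothesis that $\cS_{g-1}^r$ has a component $Z$ of codimension ${r+1\choose 2}$ you claim, ``using Nagaraj's formula'', to find a point of $Z$ with injective Gaussian map $\psi_{\theta'}$. Nagaraj's description only gives the opposite implication: injectivity of $\psi_{\theta'}$ forces the tangent space, hence the component through the point, to have codimension ${r+1\choose 2}$. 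From the dimension of $Z$ alone you can only conclude $\mathrm{rank}\ \psi_{\theta'}\leq {r+1\choose 2}$ at a general point, which is vacuous; the determinantal scheme $\cS_{g-1}^r$ could be generically non-reduced along $Z$, with every tangent space strictly larger than $\dim Z$, in which case no point of $Z$ has injective Gaussian and your induction never starts. As written you prove the theorem only under the stronger hypothesis that $Z$ is generically reduced (equivalently, contains a point with injective Gaussian), which is not what is being asserted. A second, lesser, issue is that you then apply Nagaraj's tangent-space formula at the quasi-stable point $[X,\theta]\in\ss_g$: the result quoted in the paper is for smooth spin curves, and at a boundary point one needs a degeneracy-locus structure on $\overline{\cS}_g^r$ over $A_1\cup B_1$, the identification of the (co)tangent space via $H^0(C,\omega_C\otimes\Omega_C)$ rather than $H^0(K_C^{\otimes 2})$, and a genuine proof of your residue heuristic for the conormal direction of the boundary; none of this is off the shelf, and you assert it.

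Note that the inductive step can be carried out without any Gaussian input, which is why the statement needs only the codimension hypothesis. Consider the locus $W\subset\ss_g$ of boundary spin curves $\bigl(C'\cup_y E,\ (\theta',\OO_{\PP^1}(1),\eta_E)\bigr)$ with $[C',\theta']$ a general point of $Z$, $y\in C'$, $[E]\in\cM_{1,1}$ and $\eta_E\neq\OO_E$; it lies in $\overline{\cS}_g^r$ (semicontinuity and $h^0(X,\theta)=h^0(\theta')+h^0(\eta_E)$) and has dimension $\dim Z+2=3g-4-{r+1\choose 2}$, one less than the determinantal lower bound (which extends over the boundary via Harris's construction in families) for any component of $\overline{\cS}_g^r$. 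Near a general point of $W$ the only boundary points of $\overline{\cS}_g^r$ are again of this shape: the alternative $\theta_E=\OO_E$ gives different points of $\ss_g$, and a general point of $Z$ lies on no other component of $\cS_{g-1}^r$, so locally the boundary part of $\overline{\cS}_g^r$ has dimension exactly $\dim W$. Hence any component $\mathcal Z$ of $\overline{\cS}_g^r$ through such a point satisfies $\dim\mathcal Z\geq\dim W+1$, cannot be contained in the boundary divisor, and meets it in codimension one, forcing $\dim\mathcal Z=3g-3-{r+1\choose 2}$; its intersection with $\cS_g$ is then the required component. This route uses only semicontinuity, the additivity of $h^0$ on curves of compact type, and Harris's bound, and so matches the hypotheses of the theorem exactly.
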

One could ask whether in the range $g\geq {r+2\choose 2}$, the locus $\cS_g^r$ is pure-dimensional, or even irreducible. Not much evidence in favor of this speculation exists, but there are no counterexamples either. We mention however that $\cS_g^2$ is pure of codimension $3$ in $\cS_g^-$, and when the locus $\cS_g^3$ has pure codimension $6$ in $\cS_g^+$ for $g\geq 8$, see \cite{T1}.

\subsection{Syzygies of theta characteristics and canonical rings of surfaces.} For a spin curve $[C, \theta]\in \cS_g$ one can form the graded ring
of global sections
$$R(C, \theta):=\bigoplus_{n=0}^{\infty} H^0(C, \theta^{\otimes n}).$$
Note that the canonical ring $R(C, K_C)$ appears as a graded subring of $R(C, \theta)$.
\begin{question} For a general $[C, \theta]\in \cS_g$ (or in $\cS_g^r$), describe the syzygies of $R(C, \theta)$.
\end{question}
Some tentative steps in this direction appear in \cite{R}, where it is shown that with a few exceptions, $R(C, \theta)$ is generated in degree at most $3$. The interest in this question comes to a large extent from the study of surfaces of general type. Let $S\subset \PP^{r+1}$ be a canonically embedded surface of general type and assume for simplicity that $H^1(S, \OO_S)=0$ and $r=p_g(S)-1\geq 3$. Then a general hyperplane section $C\in |K_S|$ comes equipped with an $r$-dimensional theta characteristics, that is, $[C, \theta:=\OO_C(1)]\in \cS_g^r$. By restriction there is a surjective morphism of graded rings
$R(S, K_S)\rightarrow R(C, \theta)$ and the syzygies of the two rings are identical, that is,
$$K_{p, q}(S, K_S)\cong K_{p, q}(C, \theta), \ \mbox{ for all } p, q\geq 0.$$
It is worth mentioning that using Green's duality theory \cite{G}, one finds the isomorphism
$$K_{p-1, 2}(C, \theta)^{\vee}\cong K_{r-2, 2}(C, \theta)\ \mbox{ and } K_{p-2, 3}(C, \theta)^{\vee}\cong K_{r-p+1, 1}(C, \theta)$$
between the various Koszul cohomology groups.
In a departure from the much studied case of syzygies of canonical curves,  the graded Betti diagram of a theta characteristic has three
non-trivial rows.  For two-torsion points $\eta\in J_2(C)$ a precise \emph{Prym-Green} conjecture concerning the groups $K_{p, q}(C, K_C\otimes \eta)$ has been formulated (and proven for bounded genus) in \cite{FL}. There is no clear prediction yet for the vanishing of $K_{p, q}(C, \theta)$.

\subsection{The Scorza correspondence on the moduli space of even spin curves.} To a non-effective even theta characteristic $[C, \theta]\in \cS_g^+$ one can associate the \emph{Scorza correspondence}
$$R_{\theta}:=\bigl\{(x, y)\in C\times C: H^0(C, \theta(x-y))\neq 0\bigr\}.$$
Denoting by $\pi_1, \pi_2: C\times C\rightarrow C$ the two projections and by $\Delta\subset C\times C$ the diagonal, the cohomology class of the Scorza curve can be computed:
$$\OO_{C\times C}(R_{\theta})= \pi_1^*(\theta)\otimes \pi_2^*(\theta)\otimes \OO_{C\times C}(\Delta).$$ By the adjunction formula, $p_a(R_{\theta})=1+3g(g-1)$. The curve $R_{\theta}$, first considered by Scorza \cite{Sc}, reappears in the modern literature in the beautiful paper \cite{DK}, where it plays an important role in the construction of an explicit birational isomorphism between $\cM_3$ and $\cS_3^+$. It is shown in \cite{FV1} that $R_{\theta}$ is smooth for a general even spin curve, hence one can consider the Scorza map at the level of moduli space, that is,
$$\mathrm{Sc}:\ss_g^+\dashrightarrow \mm_{1+3g(g-1)}, \ \ \mbox{ Sc}[C, \theta]:=[R_{\theta}].$$
Since $\ss_g^+$ is a normal variety, the rational map $\mathrm{Sc}$ extends to a regular morphism outside a closed set of $\ss_g^+$ of codimension at least two. It is of interest to study this map, in particular to answer the following questions:

\noindent (1) What happens to the map $\mathrm{Sc}$ over the general point of the boundary divisor $\thet$, when the determinantal definition of $R_{\theta}$ breaks down?

\noindent (2) What are the degenerate Scorza curves corresponding to general points of the boundary divisors $A_i, B_i\subset \ss_g^+$ for $i=0, \ldots, [\frac{g}{2}]$?

\noindent (3) Understand the Scorza map at the level of divisors, that is, find a complete description of the homomorphism
$$\mathrm{Sc}^*:\mathrm{Pic}(\mm_{1+3g(g-1)})\rightarrow \mathrm{Pic}(\ss_g^+).$$
Answers to all these questions are provided in the forthcoming paper \cite{FI}.

\vskip 3pt
To give one example, we explain one of the results proved. For a general point $[C, \theta]\in \thet$, we denote by $\Sigma_{\theta}$ the \emph{trace curve} induced by the pencil $\theta\in W^1_{g-1}(C)$, that is, $$\Sigma_{\theta}:=\{(x, y)\in C\times C: H^0(C, \theta(-x-y))\neq 0\}.$$  We set
$\delta:=\Sigma_{\theta}\cap \Delta$ and it is easy to see that for a generic choice of $[C, \theta]\in \thet$, the set $\delta$ consists of
$4g-4$ distinct points.
\vskip 3pt

 We consider a family $\{(C_t, \theta_t)\}_{t\in T}$  of even theta characteristics over a $1$-dimensional base, such that for a point $t_0\in T$ we have that $[C_{t_0}, \theta_{t_0}]=[C, \theta]\in \thet$ and $h^0(C_t, \theta_y)=0$ for $t\in T_0:=T-\{t_0\}$. In particular, the cycle $R_{\theta_t} \subset C_t\times C_t$ is defined for $t\in T_0$. We prove the following result:
\begin{theorem}
The flat limit of the family of Scorza curves $\{R_{\theta_t}\}_{t\in T_0}$ corresponding
to $t=t_0$,  is the non-reduced cycle $$\Sigma_{\theta}+2\Delta\subset C\times C.$$ The associated stable curve $\mathrm{Sc}[C, \theta]\in \mm_{1+3g(g-1)}$ can be described as the transverse union
$\Sigma_{\theta}\cup_{\delta} \widetilde{\Delta}$,
where $\widetilde{\Delta}$ is the double cover of $\Delta$ branched over $\delta$.
\end{theorem}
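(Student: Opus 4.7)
The plan is to identify the flat limit of $\{R_{\theta_t}\}_{t \in T_0}$ as an effective cycle on $C \times C$ by combining cohomological constraints with a local analysis, and then to extract the associated stable curve via semistable reduction.

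First I would pin down the divisor classes. Since $[R_{\theta_t}] = \pi_1^*\theta_t + \pi_2^*\theta_t + [\Delta]$ is constant along the family, the flat limit has class $\pi_1^*\theta + \pi_2^*\theta + [\Delta]$. On the other hand, the pencil $\phi \colon C \to \PP^1$ induced by $|\theta|$ realizes the trace curve as $C \times_{\PP^1} C = (\phi \times \phi)^{-1}(\Delta_{\PP^1})$, of class $\pi_1^*\theta + \pi_2^*\theta$; since this fiber product contains $\Delta$ as a smooth component at generic unramified diagonal points, one obtains $[\Sigma_\theta] = \pi_1^*\theta + \pi_2^*\theta - [\Delta]$, and hence $[\Sigma_\theta + 2\Delta]$ matches the class of the flat limit.

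Next I would show that the flat limit is indeed $\Sigma_\theta + 2\Delta$. Write it as $k \Sigma_\theta + m\Delta + E'$, with $E'$ effective and supported away from $\Sigma_\theta \cup \Delta$. For a point $(x,y) \notin \Sigma_\theta \cup \Delta$, one has $x \neq y$ and $h^0(\theta(-x-y)) = 0$; a local analysis of the condition $h^0(\theta_t(x-y)) > 0$ as a relative divisor on $(C \times C) \times T$ near $(x,y)$ shows that $(x,y)$ is not in the limit support, so $E' = 0$. A transversality argument at a generic point of $\Sigma_\theta \setminus \Delta$ gives $k = 1$, after which the class identity forces $m = 2$. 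Independently, the $(x,y) \leftrightarrow (y,x)$ symmetry of $R_{\theta_t}$ implies that its local equation in coordinates $(s, d) = (x+y, x-y)$ is even in $d$, so the diagonal multiplicity $m$ must be even, providing a consistency check.

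Finally I would pass to the stable curve by semistable reduction. Near a generic diagonal point, the total space of the family $\{R_{\theta_t}\}$ has local equation of the form $d^2 = t \cdot g(s)$ with $g$ a unit (the vanishing of $g$ cuts out $\delta = \Sigma_\theta \cap \Delta$). A degree-$2$ base change $t = \tau^2$ resolves this to $d = \pm \tau \sqrt{g(s)}$ and replaces the non-reduced diagonal $2\Delta$ by a smooth double cover $\widetilde{\Delta} \to \Delta$ branched over $\delta$. Since $\delta$ coincides with the ramification divisor of $\phi$, Riemann--Hurwitz gives $|\delta| = 4g - 4$ and $g(\widetilde{\Delta}) = 4g - 3$; combined with $g(\Sigma_\theta) = 3g^2 - 11g + 9$ (from adjunction on $C \times C$ applied to the class $\pi_1^*\theta + \pi_2^*\theta - [\Delta]$), the nodal curve $\Sigma_\theta \cup_\delta \widetilde{\Delta}$ has arithmetic genus $3g^2 - 3g + 1 = 1 + 3g(g-1)$, matching the expected genus of the Scorza curve.

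The main obstacle lies in the second paragraph: controlling the scheme-theoretic support of the flat limit and verifying $E' = 0$ together with $k = 1$. This requires a careful local study of the degenerating determinantal structure of $R_{\theta_t}$, for instance via the Szeg\"o kernel section of $\pi_1^*\theta_t \otimes \pi_2^*\theta_t \otimes \OO_{C \times C}(\Delta)$, to exclude any extraneous component and to confirm that $\Sigma_\theta$ appears with multiplicity exactly one in the limit.
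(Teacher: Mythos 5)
First, a caveat: this survey does not actually prove the statement — it is announced with the proof deferred to the forthcoming paper \cite{FI}, with the genus $3$ case done via theta functions in \cite{GSM} — so your proposal can only be judged on its own merits. Your global bookkeeping is correct and useful: the class identity $[\Sigma_{\theta}]+2[\Delta]=\pi_1^*\theta+\pi_2^*\theta+[\Delta]$, the genus count $p_a(\Sigma_{\theta})=3g^2-11g+9$, $g(\widetilde{\Delta})=4g-3$, $|\delta|=4g-4$, and the check that the glued curve has arithmetic genus $1+3g(g-1)$ all come out right, and the overall skeleton (identify the limit cycle, then stable reduction of the non-reduced central fibre) is the natural one.

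The genuine gap is the one you yourself flag: the identification of the limit cycle, i.e. $E'=0$, $k=1$, $m=2$, is asserted rather than proved, and the specific argument you sketch for excluding extra components does not work as stated. The defining condition of $R_{\theta_t}$ is $h^0(C,\theta_t(x-y))>0$, and at $t_0$, where $h^0(C,\theta)=2$, this condition holds \emph{identically} on $C\times C$ (equivalently, the limiting section $\vartheta[\theta](x-y)$ of $\pi_1^*\theta\otimes\pi_2^*\theta\otimes\OO_{C\times C}(\Delta)$ vanishes identically by Riemann's theorem). So the flat limit is the divisor of the first non-vanishing $t$-coefficient of the degenerating section, and knowing that $h^0(C,\theta(-x-y))=0$ at a point off $\Sigma_{\theta}\cup\Delta$ does not by itself tell you that this normalized leading term is non-zero there — that is exactly the content of the theorem. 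The required computation (expanding the theta function in $t$, using the heat equation to express the leading term through second derivatives of $\vartheta[\theta]$, and showing its divisor is $\Sigma_{\theta}+2\Delta$) is what \cite{GSM} carries out for $g=3$ and what must be supplied in general; the same unproved leading-term analysis is also what underlies your claimed local normal form $d^2=t\,g(s)$ along the diagonal, on which the branched-double-cover conclusion of the stable reduction step depends (a different order of vanishing in $t$ would yield a split, unbranched cover). Until that local/leading-order analysis is actually done, both halves of the statement remain unestablished, so the proposal is a plausible outline rather than a proof.
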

A proof of this result for $g=3$ using theta functions is given in \cite{GSM}.

\end{document}